\documentclass[12pt]{amsart}
\usepackage{a4wide,enumerate,xcolor}
\usepackage{amsmath,graphicx,bm}
\usepackage{mathtools} 
\allowdisplaybreaks

\usepackage[normalem]{ulem}

\usepackage{enumitem}
\setlist[itemize]{label={$\bullet$}, leftmargin=32pt, itemsep=3pt}

\let\pa\partial
\let\na\nabla
\let\eps\varepsilon
\newcommand{\N}{{\mathbb N}}
\newcommand{\R}{{\mathbb R}}
\newcommand{\diver}{\operatorname{div}}

\newcommand{\E}{\mathcal{E}}
\newcommand{\T}{\mathcal{T}}
\newcommand{\dom}{\mathcal{D}}
\newcommand{\m}{\mathrm{m}}
\newcommand{\D}{\mathrm{D}}

\newtheorem{theorem}{Theorem}
\newtheorem{lemma}[theorem]{Lemma}
\newtheorem{proposition}[theorem]{Proposition}
\newtheorem{remark}[theorem]{Remark}

\begin{document}

\title[Finite-volume scheme for two-phase flow models]{
Energy dissipation and stability of a \\
finite-volume scheme for two-phase flow models \\
with dynamic capillary pressure}

\author[A. J\"ungel]{Ansgar J\"ungel}
\address{Institute of Analysis and Scientific Computing, TU Wien, Wiedner Hauptstra\ss e 8--10, 1040 Wien, Austria}
\email{juengel@tuwien.ac.at} 

\author[J.-P. Mili\v{s}i\'c]{Josipa-Pina Mili\v{s}i\'c}
\address{University of Zagreb, Faculty of Electrical Engineering and Computing, Unska 3, 10000 Zagreb, Croatia}
\email{pina.milisic@fer.unizg.hr}

\author[S. Xhahysa]{Sara Xhahysa}
\address{Institute of Analysis and Scientific Computing, TU Wien, Wiedner Hauptstra\ss e 8--10, 1040 Wien, Austria}
\email{sara.xhahysa@tuwien.ac.at} 

\date{\today}

\thanks{The first and third author acknowledge partial support from the Austrian Science Fund (FWF), grant 10.55776/PAT2687825, and from the Austrian Federal Ministry for Women, Science and Research and implemented by \"OAD, project MULT09/2025. This work has received funding from the European Research Council (ERC) under the European Union's Horizon 2020 research and innovation programme, ERC Advanced Grant NEUROMORPH, no.~101018153. The second author acknowledges the support of the Republic of Croatia’s MSEY in course of multilateral scientific and technological cooperation in Danube region under the project MultiHeFlo. For open-access purposes, the authors have applied a CC BY public copyright license to any author-accepted manuscript version arising from this submission.} 

\begin{abstract}
An implicit Euler finite-volume scheme for degenerate pseudo-parabolic cross-diffusion equations is proposed and analyzed. The system describes the dynamics of an unsaturated two-phase flow mixture with dynamic capillary pressure in a porous medium. The numerical scheme is based on a two-point flux approximation that preserves the energy structure, ensures the conservation of total mass, and guarantees strict positivity and boundedness of the water saturation. These properties rely on carefully selected mean functions for the nonlinear components. The existence and uniqueness of a discrete solution and additional mesh-uniform bounds are proved. Numerical experiments in two space dimensions illustrate the effects of the dynamic capillary pressure.
\end{abstract}

\keywords{Richards equation, dynamic capillary pressure, cross-diffusion system, finite-volume method, existence of discrete solutions, discrete energy inequality, stability estimates.}  
 
\subjclass[2000]{65M08, 76M12; 35K65, 35K70, 35K55, 35Q35.}

\maketitle


\section{Introduction}

The modeling of chemical mixture transport in porous media is of central importance in many applications like groundwater remediation, enhanced oil recovery, and carbon dioxide sequestration. Such processes can be described by two-phase flow models. Classical formulations assume an instantaneous capillary equilibrium between both phases, but experimental evidence shows that capillary pressure can exhibit dynamic effects, depending on the rate of saturation change. Incorporating dynamic capillary pressure introduces an additional relaxation term that captures nonequilibrium interfacial effects more accurately. Moreover, many practical systems involve multiple chemical species within each phase, necessitating the inclusion of multicomponent diffusion to account for mass transfer driven by concentration gradients as well as cross-diffusion effects. The resulting model consists of coupled mass balance equations with cross-diffusion and advective fluxes driven by pressure gradients and derived from a Richards-type flow formulation. 

In this paper, we discretize the resulting thermodynamically consistent pseudo-parabolic cross-diffusion system in a bounded domain using a two-point approximation finite-volume scheme. We prove that the scheme preserves the energy structure, the positivity and maximal bound of the water saturation, as well as the total mixture mass. Moreover, we show the stability of the numerical solution and present some numerical experiments in two space dimensions. 

\subsection{Model setting}

We consider an incompressible, immiscible two-phase fluid mixture in a bounded domain $\Omega\subset\R^d$ ($d\ge 1$). The two-phase fluid, e.g.\ composed of water and gas, is described by the Richards model \cite{ChJa86}, where the water is a mixture of $n$ components. The gas is assumed to be homogeneous, perfectly mobile, and at constant pressure, so it can be removed from the system. Let $\rho_i\ge 0$ be the partial densities of the water components, $v_i\in\R^d$ their associated velocities, and $S\in[0,1]$ be the water saturation. Each component of the water mixture satisfies the mass balance equation
\begin{align*}
  \Phi(x)\pa_t(S\rho_i) + \diver(\rho_iv_i) = 0, \quad i=1,\ldots,n,
\end{align*}
where $\Phi(x)$ is the given porosity. Let $\rho=\sum_{i=1}^n\rho_i$ be the (constant) total mass density and $v$ be the barycentric velocity, defined by $\rho v=\sum_{i=1}^n\rho_iv_i$. Introducing the diffusive flux $J_i=\rho_i(v_i-v)$, the partial flux can be written as $\rho_iv_i = \rho_i v+J_i$. In the following, we propose models for $J_i$ and $v$. 

The diffusion flux is given in the Fick--Onsager formulation as
\begin{align}\label{1.Ji}
  J_i = -\sum_{j=1}^n M_{ij}\na\mu_j, \quad i=1,\ldots,n,
\end{align}
where $(M_{ij})$ is the mobility matrix satisfying $\sum_{i=1}^n M_{ij} = 0$, and the chemical potentials $\mu_j$ are determined from the free energy (see below). By Onsager's reciprocal relations, the matrix $(M_{ij})$ is symmetric and positive semidefinite. The definition of the flux $J_i$ shows that $\sum_{i=1}^n J_i=0$, and the condition $\sum_{i=1}^n M_{ij} = 0$ is consistent with this property. It implies that the mobility matrix has a vanishing eigenvalue. Thus, we cannot expect coercivity of the associated bilinear form but only hypocoercivity:
\begin{align*}
  \sum_{i,j=1}^nM_{ij}z_iz_j\ge c_M|\Pi z|^2
  \quad\mbox{for all }z\in\R^n
\end{align*}
and some $c_M>0$, where $\Pi$ is the orthogonal projection onto the orthogonal component of $\mbox{span}\{\mathrm{1}\}$ with $\mathrm{1}=(1,\ldots,1)^T$. 

The barycentric velocity is often given by Darcy's law $v=-a(S)\na p$ with the diffusion mobility $a(S)$ and the pressure $p$. The mixture pressure $p$ is related to the capillary pressure $p_c(S)$ by $p_c(S)=p_{\rm at}-p$, where $p_{\rm at}$ is the constant atmospheric pressure \cite{Bea88}. This description assumes an instantaneous capillary equilibrium. When one phase is rapidly displacing another, such as in water flooding (oil displacement), where the capillary pressure depends on the rate of change of saturation, this model is replaced by $p_{\rm at}-p = p_c(S)-b(S)\pa_t S$, where $b(S)$ denotes the dynamic capillary pressure coefficient \cite{HaGr93}. Introducing the function $\beta(S)$ by $\beta'(S)=b(S)$ and setting $P_c(S):=-p_c(S)$, the barycentric velocity is then given by
\begin{align}\label{1.v}
  v = -a(S)\na p = -a(S)\na(P_c(S)+\pa_t\beta(S)).
\end{align}
Inserting \eqref{1.Ji} and \eqref{1.v} into the expression for the partial fluxes leads to
\begin{align*}
  \rho_i v_i = -\rho_i a(S)\na \big(P_c(S)+\pa_t\beta(S)\big)
  - \sum_{j=1}^n M_{ij}\na\mu_j, \quad i=1,\ldots,n.
\end{align*}

We introduce the variables $S_i:=\rho_i S$ and normalize the total mass density such that $\sum_{i=1}^n\rho_i=1$. Then $S=\sum_{i=1}^n S_i$, and we can write the equations as
\begin{align}\label{1.Si}
  & \Phi(x)\pa_t S_i + \diver F_i = 0\quad\mbox{in }\Omega,\ t>0,\
  i=1,\ldots,n, \\
  & F_i = -\frac{S_i}{S}a(S)\na\big(P_c(S) + \pa_t\beta(S)\big)
  - \sum_{j=1}^n M_{ij}(\bm{S})\na\mu_j, \label{1.Fi}
\end{align}
where $\bm{S}=(S_1,\ldots,S_n)$. We impose the initial and mixed boundary conditions
\begin{align}\label{1.bic}
  S_i(0)=S_i^0\quad\mbox{in }\Omega, \quad
  S_i = S_i^D\quad\mbox{on }\Gamma_D, \quad
  F_i\cdot\nu = 0\quad\mbox{on }\Gamma_N,\ t>0,
\end{align}
for $i=1,\ldots,n$, where $\pa\Omega=\Gamma_D\cup\Gamma_N$, $\Gamma_D\cap\Gamma_N=\emptyset$, the measure of $\Gamma_D$ is positive, and $\Gamma_N$ is open in $\pa\Omega$. To be physically meaningful, we require that $S_i\ge 0$ and $S\in[0,1]$. We set $S^D=\sum_{i=1}^n S_i^D$.

To complete the model equations, we need to specify the chemical potentials. For this, we introduce the free energy $E$, consisting of the relative bulk and capillary energy densities:
\begin{align}\label{1.E}
  & E(\bm{S}) = E_{\rm bulk}(\bm{S}) + \Psi(S), 
  \quad\mbox{where} \\
  & E_{\rm bulk}(\bm{S}) = \sum_{i=1}^n S_i\log\frac{S_i}{S_i^D}
  - S\log\frac{S}{S^D}, \quad
  \Psi(S) = \int_{S^D}^S\int_{S^D}^z
  \frac{b(y)}{a(y)}dydz. \nonumber 
\end{align} 
This gives the chemical potential
\begin{align}\label{1.mui}
  \mu_i = \frac{\pa E}{\pa S_i} = \log\frac{S_i}{S} 
  - \log\frac{S_i^D}{S^D} + \psi(S),
  \quad\mbox{where }\psi(S) = \int_{S^D}^S\frac{b(y)}{a(y)}dy.
\end{align}
The choice of $E$ is motivated by the Gibbs--Duhem relation and a convexity requirement \cite[Remark 1]{DMZ20}.

Our model consists of equations \eqref{1.Si}--\eqref{1.bic}, \eqref{1.mui}. Mathematically, system \eqref{1.Si}--\eqref{1.Fi} {is described by a system of} pseudo-parabolic cross-diffusion equations. They were analyzed in \cite{DMZ20}, and the existence of a global weak solution was shown. The aim of this paper is the design of a structure-preserving finite-volume scheme for \eqref{1.Si}--\eqref{1.bic}, \eqref{1.mui} and the proof of uniform stability bounds. 
 

\subsection{State of the art}

Two-phase flow in porous media is a classical topic with applications in hydrology, petroleum engineering, environmental sciences, and industrial processes. In the standard Richards-type formulation, the capillary pressure is usually assumed to be an equilibrium function of the saturation. However, experimental and theoretical studies have shown that this assumption may be insufficient in situations where nonequilibrium effects are relevant. This has motivated the introduction of dynamic capillary pressure models, in which the capillary pressure depends not only on the saturation but also on its time derivative.

A rigorous mathematical analysis of two-phase flow models with dynamic capillary pressure was developed by Koch, R\"atz, and Schweizer \cite{KoRaSch13}. 
On the numerical side, finite-volume methods have become one of the standard discretization techniques for two-phase flow problems in porous media due to their local conservation properties and robustness in the presence of strongly heterogeneous coefficients. In particular, Canc\`es \cite{Can09} developed a convergent finite-volume scheme for incompressible two-phase flow in heterogeneous porous media with discontinuous capillary pressure laws. The proposed method preserves the conservative structure of the model and was shown to converge towards a weak solution of the continuous problem. Subsequently, Brenner, Canc\`es, and Hilhorst \cite{BreCanHil13} extended these ideas to multidimensional heterogeneous media and analyzed finite-volume approximations incorporating nonlinear transmission conditions across material interfaces. These works established a mathematical foundation for finite-volume discretizations of degenerate two-phase flow systems and inspired structure-preserving methods for more complex models.

Numerical aspects were further investigated by Zhang and Zegeling \cite{ZhZeg17}, who studied two-phase flow models including both dynamic capillary pressure and hysteresis. A finite-volume approximation for incompressible two-phase flow with dynamic capillary pressure was analyzed by Bouadjila, Mokrane, Saad, and Saad \cite{BoMoSaSa18}, providing convergence and stability results for a fully discrete scheme.

More recently, structure-preserving numerical methods for two-phase flow have attracted considerable attention. Kou, Chen, Salama, and Sun \cite{KoChSaSu24} proposed an energy-stable and positivity-preserving computational method for compressible and immiscible two-phase flow in porous media.  Related energy-stable modelling ideas were developed by Kou, Du, and Zhong \cite{KoDuZh21}, who incorporated fluid--fluid friction forces using a Maxwell--Stefan--Darcy approach.


The present paper contributes to this line of research by studying a multicomponent incompressible two-phase flow model of Richards type with cross-diffusion and dynamic capillary pressure. Compared with the existing literature, the main novelty lies in the combination of the entropy structure, the treatment of multicomponent cross-diffusion effects, and the construction of a finite-volume scheme that is compatible with the underlying dissipative structure of the continuous model.


\subsection{Main results}

Our first result is the derivation of a discrete energy inequality for the finite-volume solutions associated to the energy inequality
\begin{align}\label{1.ei}
  \frac{dH}{dt}(\bm{S})
  + c_M\sum_{i=1}^n\int_\Omega|\Pi(\na\mu_i)|^2 dx
  + \int_\Omega\beta'(S)P'_c(S)|\na S|^2 dx \le 0,
\end{align}
where the total energy $H$ is the sum of the free energy and the interfacial energy,
\begin{align*}
  H(\bm{S}) = \int_\Omega\bigg(\Phi(x)E(\bm{S}) 
  + \frac12|\na\beta(S)|^2\bigg)dx;
\end{align*}
see Appendix \ref{sec.app}. To make this statement more explicit, let $(\bm{S}_K^k)_{K\in\T}$ be a piecewise constant approximation of $\bm{S}$ at time $t_k=k\Delta t$, where $\Delta t>0$ is the time step and $K\in\T$ are the control volumes of the finite-volume scheme. We denote by $\D_{K,\sigma}\beta(S^k)$ a difference approximation of $\na\beta(S)$ on the edge $\sigma$ of the control volume $K$ at time $t_k$. We refer to Section \ref{sec.num} for details on the notation. We show in Theorem \ref{thm.dei} that the discrete total energy
\begin{align}\label{1.Hd}
  H_d(\bm{S}^k) = \sum_{K\in\T}\m(K)\Phi_K E(\bm{S}_K^k)
  + \frac12\sum_{\sigma\in\E}\tau_\sigma|\D_{K,\sigma}\beta(S^k)|^2,
\end{align}
where $S_K^k=\sum_{i=1}^n S_{i,K}^k$ and $\tau_\sigma$ is the so-called transmissibility coefficient, defined in \eqref{2.trans} below, satisfies the discrete energy inequality
\begin{align}\label{1.dei}
  H_d(\bm{S}^k) + c_M\Delta t\sum_{\sigma\in\E}\tau_\sigma
  |\Pi(\D_{K,\sigma}\bm{\mu}^k)|^2
  + \Delta t\sum_{\sigma\in\E}\tau_\sigma
  \D_{K,\sigma}P_c(S^k)\D_{K,\sigma}\beta(S^k) 
  \le H_d(\bm{S}^{k-1}),
\end{align}
where $\bm\mu^k$ is the discrete chemical potential vector. The proof of \eqref{1.ei} relies on nonlinear chain rules, which must be suitably adapted to the discrete setting in order to derive \eqref{1.dei}. The key challenge lies in the term $(S_i/S)a(S)$, which requires an approximation on each edge of the control volumes. This is accomplished by using the logarithmic mean and taking advantage of the fact that $a(S) = b(S)/\psi'(S) =\beta'(S)/\psi'(S)$ (see \eqref{1.E}--\eqref{1.mui}). This expression is then approximated using finite differences.

Besides the structure-preservation property, we establish the following results:
\begin{itemize}
\item Existence of a discrete solution (Theorem \ref{thm.ex}): The proof is based on the bounds from the discrete energy inequality and Schaefer's fixed-point theorem. The singular structure of $\Psi(S)$ at $S=0$ and $S=1$ (see \eqref{1.E} and \eqref{2.Psi}) implies that there exists $\kappa\in(0,1)$ such that $\kappa\le S_{K}^k\le 1-\kappa$, which prevents the degeneracy of the function $a(S)$. By exploiting properties of the logarithm, we are also able to show that $S_{i,K}^k>0$.
\item Uniqueness of a solution (Theorem \ref{thm.unique}): The non-degeneracy property $\kappa\le S_{K}^k\le 1-\kappa$ allows us to show that this discrete solution is unique. The dependence on the mesh parameters is discussed in Remark \ref{rem.mesh}.
\item Stability estimates (Proposition \ref{prop.beta}): Additional estimates can be derived from the inequality
\begin{align}\label{1.beta}
  \int_\Omega|\pa_t\beta(S)|^2 dx
  + \int_\Omega a(S)|\na\pa_t\beta(S)|^2 dx \le C(\bm{S}^0)
\end{align}
(see Appendix \ref{sec.app}), which is translated to the discrete level. Under some conditions on the nonlinearities, we show that
\begin{align*}
  \sum_{K\in\T}
  \m(K)\bigg|\frac{\beta(S_K^k)-\beta(S_K^{k-1})}{\Delta t}\bigg|^2
  + \sum_{\sigma\in\E}\tau_\sigma a_\sigma^k
  \bigg|\D_{K,\sigma}\bigg(\frac{\beta(S_K^k)-\beta(S_K^{k-1})}{\Delta t}
  \bigg)\bigg|^2 \le C,
\end{align*}
where $a_\sigma^k$ approximates $a(S)$ on the edge $\sigma$ at time $t_k$. 
\end{itemize}

Furthermore, we present some numerical experiments in two space dimensions.


\subsection{Assumptions}

We specify our assumptions on the nonlinear functions. To avoid technical difficulties, we assume explicit forms for $a$, $b$, and $P_c$ as in \cite{Mil18}. 

\begin{itemize}
\item[\bf (A1)] The porosity $\Phi:\Omega\to[0,1]$ is integrable and satisfies $\Phi\ge c_\Phi>0$ in $\Omega$. The initial data satisfy $H(\bm{S}^0)<\infty$ and $S_i^0\ge c$, $S^0=\sum_{i=1}^n S_i^0\le 1-c$ in $\Omega$ for some $c\in(0,1)$.
\item[\bf (A2)] The mobility matrix $(M_{ij})\in C^0([0,1]^n;\R^{n\times n})$ is symmetric, positive semidefinite, and there exist $0<c_M<C_M<\infty$ such that 
\begin{align*}
  c_M|\Pi z|^2 \le \sum_{i,j=1}^n M_{ij}(\bm{S})z_iz_j
  \le C_M|\Pi z|^2 \quad\mbox{for all }z\in\R^n,\ \bm{S}\in\dom,
\end{align*}
where {$\Pi=I-(\mathrm{1}\otimes\mathrm{1})/n$} is the orthogonal projection on $\{z\in\R^n:z\cdot\mathrm{1}=0\}$ with $\mathrm{1}=(1,\ldots,1)\in\R^n$ and $\dom=\{\bm{S}\in\R^n:S_1,\ldots,S_n>0$, $\sum_{i=1}^n S_i<1\}$.
\item[\bf (A3)] The diffusion mobility $a(S)$ and the relaxation coefficient $b(S)=\beta'(S)$ equal
\begin{align*}
  a(S) = \frac{S^{\gamma_0}(1-S)^{\gamma_1}}{S^{\gamma_0}
  + (1-S)^{\gamma_1}}, \quad
  b(S) = \frac{S^{\gamma_0}}{S^{\gamma_0}+(1-S)^{\gamma_1}}
  \bigg(1 + \frac{(1-S)^{\gamma_1}}{S^{\gamma_2}}\bigg)
\end{align*}
for $S\in[0,1]$, where $\gamma_0>\gamma_2$, $\gamma_1>2$, and $\gamma_2>2$. 
\item[\bf (A4)] The stationary capillary pressure $P_c(S)$ is defined by
\begin{align*}
  P'_c(S) = \frac{1}{S^{p_0}} + \frac{1}{(1-S)^{p_1}},
  \quad\mbox{where }p_0\le\gamma_2,\, p_1\le\gamma_1.
\end{align*}
\end{itemize}

Assumption (A1) is needed to obtain a parabolic problem. We already explained in the introduction that Assumption (A2) is often referred to as hypocoercivity, which is the strongest coercivity condition that the mobility matrix can satisfy under the constraint $\sum_{i=1}^n J_i=0$. {The choice $z_i=1$ in Assumption (A2) shows that $\sum_{i=1}^n M_{ij}=0$ for $j=1,\ldots,n$.} The function $a(S)$ is interpreted as the mobility of the wetting phase. Physically, it should be nonnegative and satisfy the degeneracy conditions $a(0)=a(1)=0$. The relaxation coefficient $b(S)$ should be nonnegative, increasing, and vanish at $S=0$ (see Lemma \ref{lem.beta} below). These properties are satisfied by the choice of $a(S)$ and $b(S)$ in Assumption (A3). The stationary capillary pressure is increasing and satisfies $P_c(0)=-\infty$, $P_c(1)=\infty$. Furthermore, a computation gives 
\begin{align*}
  \psi'(S) = \frac{b(S)}{a(S)} 
  = \frac{1}{S^{\gamma_2}} + \frac{1}{(1-S)^{\gamma_1}}.
\end{align*}
It follows from definition \eqref{1.E} of $\Psi$ and integration from $S_D=1/2$ to $S$ that
\begin{align}\label{2.Psi}
  \Psi(S) &= -\frac{(1-S)^{2-\gamma_1}-(1-S_D)^{2-\gamma_1}}{
  (\gamma_1-1)(\gamma_1-2)}
  - \frac{S^{2-\gamma_2}-S_D^{2-\gamma_2}}{(\gamma_2-1)(\gamma_2-2)} \\
  &\phantom{xx}- \frac{(1-S_D)^{1-\gamma_1}}{\gamma_1-1}(S-S_D)
  + \frac{S_D^{1-\gamma_2}}{\gamma_2-1}(S-S_D). \nonumber 
\end{align}
Thus, $\Psi(0)=\Psi(1)=\infty$, and we conclude from the energy inequality \eqref{1.ei} and $\gamma_1$, $\gamma_2>2$ that $0<S<1$ a.e.\ in $\Omega$. This property will be extended to the discrete case, giving in fact the stronger property $0<\kappa\le S_K^k\le 1-\kappa$ for some $\kappa\in(0,1)$. We infer from the conditions $p_0\le\gamma_2$ and $p_1\le\gamma_1$ in Assumption (A4) that there exists $C>0$ such that 
\begin{align*}
  P'_c(S)\le C\frac{b(S)}{a(S)} = C\psi'(S) \quad\mbox{for }0<S<1.
\end{align*}
This property is used to derive the additional bounds \eqref{1.beta}. On the discrete level, the derivatives are replaced by finite differences; see the proof of Proposition \ref{prop.beta}.


\subsection{Organization of the paper}

The numerical scheme is introduced in Section \ref{sec.num}. We prove the discrete energy inequality in Section \ref{sec.dei}, which is used to show the existence of a discrete solution in Section \ref{sec.ex}. {The uniqueness of the solution is proved in Section \ref{sec.unique}.} In Section \ref{sec.stab}, we establish additional discrete stability bounds and discuss the convergence of the scheme. Some numerical experiments are presented in Section \ref{sec.exp}. Finally, inequalities \eqref{1.ei} and \eqref{1.beta} are shown in Appendix \ref{sec.app}.


\section{Numerical scheme}\label{sec.num}

We introduce the notation needed for our finite-volume scheme and present the numerical scheme.

\subsection{Notation}

The domain $\Omega$ is discretized by an admissible triangulation in the sense of \cite[Definition 9.1]{EGH00}. The triangulation consists of a family $\T$ of open polygonal convex subsets of $\Omega$ (cells or control volumes), a family $\E$ of edges (or faces in three space dimensions), and a family of points $\mathcal{P}=(x_K)_{K\in\T}$ associated to the cells. The admissibility assumption implies that the line segment connecting the points $x_K$ and $x_L$ of two neighboring cells is orthogonal to their common edge $\sigma=K|L$. 

The family of edges $\E$ is split into internal and external edges $\E=\E_{\rm int}\cup\E_{\rm ext}$, where $\E_{\rm int} = \{\sigma\in\E:\sigma\subset\Omega\}$ and $\E_{\rm ext} = \{\sigma\in\E:\sigma\subset\pa\Omega\}$. Each exterior edge is assumed to be an element of either the Dirichlet or Neumann boundary, setting $\E_{\rm ext} = \E_{\rm ext}^D\cup\E_{\rm ext}^N$. For given $K\in\T$, we define the set $\E_K$ of the edges of $K$, which consists of internal edges and possibly of edges on the Dirichlet or Neumann boundary, $\E_K= \E_{{\rm int},K}\cup\E_{{\rm ext},K}^D\cup\E_{{\rm ext},K}^N$. 

The size of the mesh is denoted by $\Delta x = \sup\{\mbox{diam}(K):K\in\T\}$. For given $\sigma\in\E$, we define
\begin{align*}
  \rm{d}_\sigma = \begin{cases}
  d(x_K,x_L) &\mbox{if }\sigma=K|L\in\E_{\rm int}, \\
  d(x_K,\sigma) &\mbox{if }\sigma\in\E_{\rm ext},
  \end{cases}
\end{align*}
where $d(x,y)$ is the Euclidean distance between two points $x$ and $y$. The transmissibility coefficient is defined by
\begin{align}\label{2.trans}
  \tau_\sigma = \frac{\m(\sigma)}{\rm{d}_\sigma},
\end{align}
where $\m(\sigma)$ denotes the Lebesgue measure of $\sigma$. 

We use a uniform time discretization with time step $\Delta t>0$, and we set $t_k=k\Delta t$ for $k=1,\ldots,N$, where $T>0$, $N\in\N$, and $\Delta t=T/N$. We denote by $\mathcal{M}$ an admissible space-time discretization of $\Omega\times(0,T)$ composed of an admissible mesh $(\T,\E,\mathcal{P})$ and the values $(\Delta x,\Delta t)$. The size of $\mathcal{M}$ is given by $\eta:=\max\{\Delta x,\Delta t\}$. 

We denote by $V_{\Delta x}^n$ the space of piecewise constant vector-valued functions, defined by
\begin{align*}
  V_{\Delta x}^n = \bigg\{v:\overline\Omega\to\R^n: \exists
  (v_K)_{K\in\T}\subset\R^n,\, 
  v(x) = \sum_{K\in\T}v_K\mathrm{1}_K(x)\bigg\},
\end{align*}
where $\mathrm{1}_K$ is the characteristic function of $K$. Furthermore, we define for $v\in V_{\Delta x}^n$ and $\sigma\in\E$ the difference
\begin{align*}
  \D_{K,\sigma}v = v_{K,\sigma}-v_K, \quad\mbox{where}\quad
  v_{K,\sigma} = \begin{cases}
  v_L &\mbox{if }\sigma=K|L\in\E_{{\rm int},K}, \\
  v_\sigma^D &\mbox{if }\sigma\in\E_{{\rm ext},K}^D, \\
  v_K &\mbox{if }\sigma\in\E_{{\rm ext},K}^N,
  \end{cases}
\end{align*}
where $v_\sigma^D = \m(\sigma)^{-1}\int_\sigma vds$. We introduce the discrete $H^1(\Omega)$ seminorm and discrete $H^1(\Omega)$ norm as
\begin{align*}
  |v|_{1,2,\T} = \bigg(\sum_{\sigma\in\E}\tau_\sigma
  |\D_{K,\sigma}v|^2\bigg)^{1/2}, \quad
  \|v\|_{1,2,\T} = \big(\|v\|_{0,2,\T}^2 + |v|_{1,2,\T}^2\big)^{1/2},
\end{align*}
respectively, where the $L^p(\Omega)$ norm for $1\le p<\infty$ is given by
\begin{align*}
  \|v\|_{0,p,\T} = \bigg(\sum_{K\in\T}\m(K)|v_K|^p\bigg)^{1/p}.
\end{align*}
The discrete integration-by-parts formula becomes for $v\in V_{\Delta x}^n$ and some numerical flux $F_{K,\sigma}$:
\begin{align}\label{2.dibp}
  \sum_{K\in\T}\sum_{\sigma\in\E_K}F_{K,\sigma}v_K
  = -\sum_{\sigma\in\E}F_{K,\sigma}\D_{K,\sigma}v
  + \sum_{\sigma\in\E_{\rm ext}^D}F_{K,\sigma}v_{K,\sigma}.
\end{align}


\subsection{Numerical scheme}

The implicit Euler finite-volume scheme now reads as follows:
\begin{align}
  & \frac{\m(K)}{\Delta t}\Phi_K(S_{i,K}^k - S_{i,K}^{k-1})
  + \sum_{\sigma\in\E_K}F_{i,K,\sigma}(\bm{S}^k) = 0, 
  \quad K\in\T,\ k\in\N,\
  i=1,\ldots,n, \label{2.eq1} \\
  & F_{i,K,\sigma}(\bm{S}^k) = -\tau_\sigma
  c_{i,\sigma}^k a_{\sigma}^k\D_{K,\sigma}\bigg(P_c(S^k) 
  + \frac{\beta(S^k)-\beta(S^{k-1})}{\Delta t}\bigg) 
  \label{2.eq2}
  - \tau_\sigma\sum_{j=1}^n M_{ij}(\bm{S}_\sigma^k)
  \D_{K,\sigma}\mu_j^k, 
\end{align}
with the initial and boundary conditions 
\begin{align*}
  S_{i,K}^0 = \frac{1}{\m(K)}\int_K S_i^0dx, \quad
  S_{i,\sigma}^D = \frac{1}{\m(\sigma)}\int_\sigma S_i^D ds 
  \quad\mbox{for }K\in\T,\ \sigma\in\E_{\rm ext},
\end{align*}
and $i=1,\ldots,n$. In equations \eqref{2.eq1}--\eqref{2.eq2}, we have used the following notation:
\begin{align*}
  \Phi_K &= \frac{1}{\m(K)}\int_K\Phi(x)dx, \quad
  a_{\sigma}^k = \frac{\beta(S_{K,\sigma}^k)-\beta(S_{K}^k)
  }{\psi(S_{K,\sigma}^k)-\psi(S_K^k)}, \quad 
  c_{i,K}^k = \frac{S_{i,K}^k}{S_K^k},
\end{align*}
where the values $S_{i,K}^k$ are given by
\begin{align*}
  S_{i,K}^k = S_i(\bm{\mu}_K^k)\quad\mbox{for } 
  K\in\T,\,k\in\N,\ i=1,\ldots,n.
\end{align*}
The function $\bm{S}=(S_1,\ldots,S_n)^T:\R^n\to\dom$, $\bm \mu\mapsto\bm{S}(\bm{\mu})$, is the inverse of $\mu_i = \log(S_i/S) + \psi(S)$, $i=1,\ldots,n$, which exists thanks to \cite[Lemma 4.1]{DMZ20}. (Recall that $\dom=\{\bm{S}\in\R^n:S_1,\ldots,S_n>0$, $0< S<1\}$.)

{We still need to define the values on the edges. Let $S_{i,\sigma}^k = \frac12(S_{i,K}^k+S_{i,K,\sigma}^k)$. To define $m_\sigma^k$ and $c_{i,\sigma}^k$}, we introduce a subdivision of the set $\{1,\ldots,n\} = \mathcal{N}_1(\sigma,k)\cup\mathcal{N}_2(\sigma,k)$, where
\begin{align*}
  \mathcal{N}_1(\sigma,k) = \{1\le i\le n: 
  c_{i,K}^k\neq c_{i,K,\sigma}^k\}, \quad
  \mathcal{N}_2(\sigma,k) = \{1\le i\le n: 
  c_{i,K}^k = c_{i,K,\sigma}^k\}.
\end{align*}
Then we define
\begin{align*}
  m_\sigma^k &= \bigg(1-\sum_{i\in\mathcal{N}_2(\sigma,k)}c_{i,K}^k
  \bigg)^{-1}\sum_{i\in\mathcal{N}_1(\sigma,k)}
  \frac{c_{i,K,\sigma}^k- c_{i,K}^k}{
  \log c_{i,K,\sigma}^k-\log c_{i,K}^k}, \\
  c_{i,\sigma}^k &= \begin{cases}
  \displaystyle
  \frac{1}{m_\sigma^k}\frac{c_{i,K,\sigma}^k-c_{i,K}^k}{
  \log c_{i,K,\sigma}^k - \log c_{i,K}^k} 
  &\mbox{if }i\in\mathcal{N}_1(\sigma,k), \\
  c_{i,K}^k &\mbox{if }i\in\mathcal{N}_2(\sigma,k).
  \end{cases}
\end{align*}
{When $\mathcal{N}_1(\sigma,k)$ is the empty set, the scaling factor $m_\sigma^k$ is not defined. In fact, in such a situation, $c_{i,\sigma}^k=c_{i,K}^k$, and the factor $m_\sigma^k$ is not needed.} The value $c_{i,\sigma}^k$ approximates $S_i/S$ in $K$ at time $t_k$ and satisfies the following properties:
\begin{align}\label{2.c} 
  \sum_{i=1}^n c_{i,\sigma}^k = 1, \quad
  \sum_{i=1}^n c_{i,\sigma}^k(\log c_{i,K,\sigma}^k-\log c_{i,K}^k) = 0.
\end{align}
The quotient
\begin{align}\label{2.quot}
  a_\sigma^k = \frac{\beta({S_{K,\sigma}^k})-\beta({S_{K}^k})
  }{\psi(S_{K,\sigma}^k)-\psi(S_K^k)}
\end{align}
approximates $\beta'(S)/\psi'(S) = a(S)$ on $\sigma$ at time $t_k$. The definitions of $c_{i,\sigma}^k$ and \eqref{2.quot} are the novel elements of our finite-volume discretization guaranteeing the preservation of the structure.


\section{Discrete energy inequality}\label{sec.dei}

We prove inequality \eqref{1.dei}, recalling definition \eqref{1.Hd} of the discrete total energy $H_d$. 

\begin{theorem}\label{thm.dei}
Let Assumptions (A1)--(A2) hold and let $\bm{S}^k\in V_{\Delta x}^n$ be a solution to \eqref{2.eq1}--\eqref{2.eq2} satisfying $S_{i,K}^k>0$, $0<S_K^k<1$. Then
\begin{align*}
  H_d(\bm{S}^k) + c_M\Delta t\sum_{\sigma\in\E}\tau_\sigma
  |\Pi(\D_{K,\sigma}\bm{\mu}^k)|^2
  + \Delta t\sum_{\sigma\in\E}\tau_\sigma
  \D_{K,\sigma}P_c(S^k)\D_{K,\sigma}\beta(S^k) 
  \le H_d(\bm{S}^{k-1}).
\end{align*}
\end{theorem}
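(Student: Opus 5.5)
We multiply the scheme \eqref{2.eq1} by $\Delta t\,\mu_{i,K}^k$, sum over $i=1,\ldots,n$ and $K\in\T$, and use the discrete integration-by-parts formula \eqref{2.dibp}. We take $\mu^k_{i,\sigma}$ on Dirichlet edges to be the boundary value; then the chemical potentials vanish there by the normalization in \eqref{1.mui}, so the boundary sum drops. This gives
\begin{align*}
  \sum_{K\in\T}\m(K)\Phi_K\sum_{i=1}^n(S_{i,K}^k-S_{i,K}^{k-1})\mu_{i,K}^k
  = \Delta t\sum_{\sigma\in\E}\sum_{i=1}^n F_{i,K,\sigma}\D_{K,\sigma}\mu_i^k .
\end{align*}
The left-hand side is bounded from below by $\sum_K\m(K)\Phi_K(E(\bm S_K^k)-E(\bm S_K^{k-1}))$ minus the contribution of $\Psi$. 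For this we use the convexity of $E$ on $\dom$: $E_{\rm bulk}$ is convex since it is one-homogeneous and equals $S$ times a relative entropy of $c_i=S_i/S$. We prefer, however, to split off $\Psi$ separately.

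On the right-hand side we treat the two parts of the flux separately. The cross-diffusion part gives $-\tau_\sigma\sum_{i,j}M_{ij}(\bm S_\sigma^k)\D\mu_i^k\D\mu_j^k\le -c_M\tau_\sigma|\Pi\D_{K,\sigma}\bm\mu^k|^2$ by (A2). For the advective part, we write $\D_{K,\sigma}\mu_i^k=\D_{K,\sigma}\log c_i^k+\D_{K,\sigma}\psi(S^k)$; the Dirichlet shift is constant and disappears in the difference. Summing against $c_{i,\sigma}^k$ and using both identities in \eqref{2.c}, we obtain
\begin{align*}
  \sum_i c^k_{i,\sigma}\D_{K,\sigma}\mu_i^k=\D_{K,\sigma}\psi(S^k).
\end{align*}
By definition \eqref{2.quot}, $a_\sigma^k\D_{K,\sigma}\psi(S^k)=\D_{K,\sigma}\beta(S^k)$; if $S_{K,\sigma}^k=S_K^k$, both sides vanish. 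This is the key discrete chain rule. The advective contribution therefore equals
\begin{align*}
  -\Delta t\sum_\sigma\tau_\sigma\D_{K,\sigma}\beta(S^k)\,
  \D_{K,\sigma}\Big(P_c(S^k)+\frac{\beta(S^k)-\beta(S^{k-1})}{\Delta t}\Big).
\end{align*}
Its first piece is exactly the $P_c$ dissipation term of the statement. Its second piece is handled with $x(x-y)\ge\frac12(x^2-y^2)$, which gives at most $-\frac12\sum_\sigma\tau_\sigma(|\D\beta(S^k)|^2-|\D\beta(S^{k-1})|^2)$, i.e.\ the interfacial part of $H_d$.

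The main obstacle is the left-hand side. The chemical potential contains $\psi(S)$, but $\Psi=\int\psi$ enters $E$ with the weight $\Phi_K$, while on the right-hand side the $\beta$-terms do not carry $\Phi_K$. We first check consistency: in the continuous energy, the $\Psi$ part of $\Phi E$ combined with $\int\pa_t\beta\,\psi'$ should reproduce $\frac12|\na\beta|^2$. If it does not match, we instead treat $\Phi_K\sum_i\mu_{i,K}^k(S^k_{i,K}-S^{k-1}_{i,K})$ fully by convexity of $E=E_{\rm bulk}+\Psi$. Convexity of $\Psi$ holds because $\psi'=b/a>0$, and this gives
\begin{align*}
  \sum_i\mu^k_{i,K}(S^k_{i,K}-S^{k-1}_{i,K})\ge E(\bm S_K^k)-E(\bm S_K^{k-1}),
\end{align*}
using $\mu=\na E$. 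Combining this with the previous two paragraphs and rearranging yields the claimed inequality. The remaining point to verify carefully is that the Dirichlet boundary terms vanish, or have the right sign, for $S_\sigma^D$ data: $\mu$ vanishes at the boundary values by \eqref{1.mui}, and $\D\beta$ on Dirichlet edges is included in the $\E$-sum of $H_d$.
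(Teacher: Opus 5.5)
Your proposal is correct and follows the paper's proof essentially step by step: you test with $\Delta t\,\mu_{i,K}^k$, use the convexity of the full $E$ for the time-difference term, apply (A2) to the cross-diffusion part, use the discrete chain rule $\sum_i c_{i,\sigma}^k a_\sigma^k \D_{K,\sigma}\mu_i^k=\D_{K,\sigma}\beta(S^k)$ obtained from \eqref{2.c} and \eqref{2.quot}, and finish with $x(x-y)\ge\frac12(x^2-y^2)$. The ``main obstacle'' you raise is not a real one, since $\Psi$ is simply part of $E$ and is absorbed by convexity while the interfacial energy comes only from the $\pa_t\beta$ part of the flux, so your fallback route is exactly the paper's; your explicit handling of Dirichlet edges via $\mu_i^D=0$ and of the case $S_{K,\sigma}^k=S_K^k$ fills in points the paper leaves implicit.
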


Observe that the terms on the left-hand side are nonnegative, since $P_c$ and $\beta$ are increasing functions.

\begin{proof}
We multiply \eqref{2.eq1} by $\Delta t\mu_{i,K}^k$ and sum over $K\in\T$ and $i=1,\ldots,n$. This gives $I_1+I_2=0$, where
\begin{align*}
  I_1 = \sum_{i=1}^n\sum_{K\in\T}\m(K)\Phi_K(S_{i,K}^k-S_{i,K}^{k-1})
  \mu_{i,K}^k, \quad
  I_2 = \Delta t\sum_{i=1}^n\sum_{K\in\T}\sum_{\sigma\in\E_K}
  F_{i,K,\sigma}(\bm{S}^k) \mu_{i,K}^k.
\end{align*}
We deduce from the definition of $\bm{\mu}^k$ and the convexity of $E$ that
\begin{align*}
  I_1 &= \sum_{i=1}^n\sum_{K\in\T}\m(K)\Phi_K(S_{i,K}^k-S_{i,K}^{k-1})
  \frac{\pa E}{\pa S_i}(\bm{S}_K^k) \\
  &\ge \sum_{K\in\T}\m(K)\Phi_K
  (E(\bm{S}_K^k)-E(\bm{S}_K^{k-1})).
\end{align*}
We insert definition \eqref{2.eq2} of the numerical flux and integrate by parts:
\begin{align*}
  I_2 &= \Delta t\sum_{i=1}^n\sum_{\sigma\in\E}\tau_\sigma
  c_{i,\sigma}^k a_{\sigma}^k\D_{K,\sigma}\bigg(P_c(S^k) 
  + \frac{\beta(S^k)-\beta(S^{k-1})}{\Delta t}\bigg)
  \D_{K,\sigma}\mu_{i}^k \\
  &\phantom{xx}+ \Delta t\sum_{i,j=1}^n\sum_{\sigma\in\E}\tau_\sigma
  M_{ij}(\bm{S}_\sigma^k)
  \D_{K,\sigma}\mu_j^k\D_{K,\sigma}\mu_{i}^k =: I_{21} + I_{22}.
\end{align*}
By Assumption (A1),
\begin{align*}
  I_{22} \ge c_M\Delta t\sum_{i=1}^n\sum_{\sigma\in\E}\tau_\sigma
  |\Pi(\D_{K,\sigma}\mu_{i}^k)|^2 
  = c_M\Delta t\sum_{\sigma\in\E}\tau_\sigma
  |\Pi(\D_{K,\sigma}\bm\mu^k)|^2.
\end{align*}
We reformulate for $\sigma=K|L$ and $c_{i,K}^k>0$, $c_{i,L}^k>0$,  $c_{i,L}^k\neq c_{i,K}^k$, 
using the definitions of $c_{i,\sigma}$ and $c_{i,K}^k$, as well as property \eqref{2.c},
\begin{align*}
  \sum_{i=1}^n c_{i,\sigma}^k a_\sigma^k\D_{K,\sigma}\mu_i^k
  &= {a_\sigma^k}\sum_{i=1}^n c_{i,\sigma}^k 
  \bigg(\log\frac{S_{i,L}^k}{S_L^k} - \log\frac{S_{i,K}^k}{S_K^k}
  + \psi(S_L^k) - \psi(S_K^k)\bigg) \\ 
  &= a_\sigma^k(\psi(S_L^k) - \psi(S_K^k))
  = \beta(S_{L}^k)-\beta(S_{K}^k),
\end{align*}
where we have taken into account the definition of $m_\sigma^k$ and $a_\sigma^k$. The same result follows if $c_{i,L}^k=c_{i,K}^k>0$ because of the consistency of the definition of $a_\sigma^k$. Thus, the term $I_{21}$ becomes
\begin{align*}
  I_{21} &= \Delta t\sum_{\sigma\in\E}\tau_\sigma
  (\beta(S_{L}^k)-\beta(S_{K}^k))\D_{K,\sigma}\bigg(P_c(S^k) 
  + \frac{\beta(S^k)-\beta(S^{k-1})}{\Delta t}\bigg) \\
  &= \sum_{\sigma\in\E}\tau_\sigma
  \D_{K,\sigma}\beta(S^k)\big(\D_{K,\sigma}\beta(S^k)
  - \D_{K,\sigma}\beta(S^{k-1})\big) \\
  &\phantom{xx}+ \Delta t\sum_{\sigma\in\E}\tau_\sigma
  \D_{K,\sigma}\beta(S^k)\D_{K,\sigma}P_c(S^k) \\
  &\ge \frac12\sum_{\sigma\in\E}\tau_\sigma
  \big[\big(\D_{K,\sigma}\beta(S^k)\big)^2 
  - \big(\D_{K,\sigma}\beta(S^{k-1})\big)^2\big]
  + \Delta t\sum_{\sigma\in\E}\tau_\sigma
  \D_{K,\sigma}\beta(S^k)\D_{K,\sigma}P_c(S^k).
\end{align*}
Collecting the estimates for $I_1$, $I_{21}$, and $I_{22}$ and taking into account definition \eqref{1.Hd} of $H_d$ concludes the proof.
\end{proof}


\section{Existence of discrete solutions}\label{sec.ex}

The discrete energy inequality in Theorem \ref{thm.dei} provides uniform bounds that allow us to apply a fixed-point argument, leading to the existence of a discrete solution.

\begin{theorem}[Existence of a solution]\label{thm.ex}
Let Assumptions (A1)--(A4) hold and let $\bm{S}^0\in V_{\Delta x}^n$. Then there exists a solution $\bm{S}^k$ to \eqref{2.eq1}--\eqref{2.eq2} such that $S_i^k>0$ for $i=1,\ldots,n$ and $0<S^k<1$.
\end{theorem}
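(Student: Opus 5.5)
We argue by induction on $k$ and, at each time step, apply Schaefer's fixed-point theorem in the chemical potential variables, where positivity of $S_{i,K}$ and $0<S_K<1$ come for free through the inverse map $\bm{S}(\bm\mu)$ of \cite[Lemma 4.1]{DMZ20}. Fix $k\ge1$ and a given $\bm S^{k-1}$ with $S_{i,K}^{k-1}>0$, $0<S_K^{k-1}<1$, so that $H_d(\bm S^{k-1})<\infty$ (for $k=1$ this is (A1)). Let $Z=\R^{n\#\T}$ be the space of vectors $\bm w=(\bm w_K)_{K\in\T}$ of shifted potentials. On the Dirichlet edges the potential is fixed by $S^D_\sigma$. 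Given $\bm w\in Z$, define $\bm{S}_K=\bm S(\bm w_K)\in\dom$ and compute all edge coefficients $c_{i,\sigma}$, $a_\sigma$, $M_{ij}(\bm S_\sigma)$ and the flux $F_{i,K,\sigma}(\bm S)$ from these values. Then define $\bm\mu=L(\bm w)$ as the solution of the linear regularized problem
\begin{align*}
  \eps\Big(\sum_{\sigma\in\E_K}\tau_\sigma\,\D_{K,\sigma}\bm\mu\cdot\bm\phi_K
  \ \mbox{summed}\ + \sum_K \m(K)\bm\mu_K\cdot\bm\phi_K\Big)
  = -\sum_{K}\Big(\frac{\m(K)}{\Delta t}\Phi_K(\bm S_K-\bm S_K^{k-1})
  +\sum_{\sigma\in\E_K}F_{K,\sigma}(\bm S)\Big)\cdot\bm\phi_K
\end{align*}
for all test vectors $\bm\phi$. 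This has a unique solution by Lax--Milgram, since the left-hand side is coercive. Continuity of $L$ follows from the continuity of $\bm S(\cdot)$, of the logarithmic mean and of the quotient $a_\sigma$, which extends continuously to $\beta'/\psi'=a$ on the diagonal and is smooth and positive for arguments in $(0,1)$. Compactness is automatic in finite dimensions.

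For the a priori bound of Schaefer's theorem, take $\bm\mu=\lambda L(\bm\mu)$ with $\lambda\in(0,1]$ and test with $\bm\mu$ itself, that is, with $\Delta t\,\bm\mu_K$. The right-hand side reproduces exactly the computation in the proof of Theorem~\ref{thm.dei}, which only used $\bm\mu_K=\nabla E(\bm S_K)$ and the edge identities \eqref{2.c}, \eqref{2.quot}. This yields
\begin{align*}
  \frac{\eps\Delta t}{\lambda}\|\bm\mu\|_{1,2,\T}^2 + H_d(\bm S)
  + c_M\Delta t\sum_\sigma\tau_\sigma|\Pi\D_{K,\sigma}\bm\mu|^2
  + \Delta t\sum_\sigma \tau_\sigma\D_{K,\sigma}P_c\,\D_{K,\sigma}\beta \le H_d(\bm S^{k-1}).
\end{align*}
The Dirichlet boundary terms in \eqref{2.dibp} require care. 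I plan to work with $\bm\mu-\bm\mu^D$ (after the shift so that $\mu^D=0$, which is the reason for the normalization of $E$ by $S^D$) so that they vanish. Since $\bm S(\cdot)$ is bounded, $E$ is bounded from below on $\dom$, so this gives a $\lambda$-independent bound on $\|\bm\mu\|_{1,2,\T}$ (with $\eps,\Delta t$ and the mesh fixed). Schaefer's theorem therefore yields a fixed point $\bm\mu^\eps$ for each $\eps>0$.

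The main step, and the expected obstacle, is removing $\eps$. Here we need bounds uniform in $\eps$ which show that $S_K$ and $S_{i,K}$ stay away from the boundary of $\dom$. From the inequality above, $\sum_K\m(K)\Phi_K\Psi(S_K)\le C$ independently of $\eps$. Since $\Psi(0)=\Psi(1)=\infty$ by \eqref{2.Psi} (using $\gamma_1,\gamma_2>2$) and $\m(K)\Phi_K\ge c_\Phi\m(K)>0$, this gives $\kappa\le S_K^\eps\le1-\kappa$ with $\kappa$ depending on the mesh but not on $\eps$. For the components, positivity of $S_{i,K}$ is not forced by the entropy bound, since $s\log s$ is bounded. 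Instead I use the uniform bound on $\sum_\sigma\tau_\sigma|\Pi\D_{K,\sigma}\bm\mu|^2$ together with the $P_c$–$\beta$ term, which controls the differences of $\psi(S_K)$ and $\log c_{i,K}$ across edges. Because $\mu_i-\psi(S)=\log c_i+\mathrm{const}$ and the Dirichlet values $c_{i,\sigma}^D$ are positive, following a chain of cells from $\Gamma_D$ to any $K$ (connectivity of the mesh) bounds $|\log c_{i,K}|$ uniformly. The subtle point is that $\Pi$ removes the mean, but the differences $\log c_i-\log c_j$ are exactly what $\Pi\bm\mu$ controls, and together with $\sum_i c_i=1$ this gives a lower bound on every $c_i$. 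Hence $\bm\mu^\eps$ is bounded uniformly in $\eps$, and a subsequence converges. Passing to the limit $\eps\to0$ in the finite-dimensional equations, using continuity of the fluxes, produces a solution of \eqref{2.eq1}--\eqref{2.eq2} with $S_{i,K}^k\ge c>0$ and $\kappa\le S_K^k\le1-\kappa$. Induction in $k$ concludes.
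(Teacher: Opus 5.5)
Your proposal follows the paper's proof step for step. The shared ingredients are Schaefer's theorem in the potential variables for an $\eps$-regularized problem, the $\eps$-dependent a priori bound from the computation of Theorem~\ref{thm.dei}, and the mesh-dependent bound $\kappa\le S_K^\eps\le 1-\kappa$ from $\Psi(0)=\Psi(1)=\infty$. Positivity of $S_{i,K}$ comes from the bound on $\Pi\bm\mu^\eps$ (your chain-of-cells argument is the discrete Poincar\'e inequality on a fixed mesh), and the proof ends with a compactness limit $\eps\to0$.

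The variations are minor:
\begin{itemize}
\item You regularize with a discrete $H^1$ inner product instead of the paper's zeroth-order term $\eps\m(K)\mu_{i,K}$. This is harmless, but the form must read $\sum_{\sigma\in\E}\tau_\sigma\D_{K,\sigma}\bm\mu\cdot\D_{K,\sigma}\bm\phi+\sum_K\m(K)\bm\mu_K\cdot\bm\phi_K$. With $\bm\phi_K$ in the first sum, as you wrote it, the diffusion part has the wrong sign and the form is not coercive.
\item You show that $|\log c_{i,K}-\log c_{j,K}|\le M$ together with $\sum_i c_{i,K}=1$ gives $c_{i,K}\ge e^{-M}/n$. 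This is a cleaner substitute for \cite[Lemma 17]{DMZ25}. It bounds the whole vector $\bm\mu^\eps$ uniformly in $\eps$, so no separate identification of the limit potential is needed.
\end{itemize}

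One step does fail as stated, and the paper passes over it the same way (``standard arguments show that it is continuous''). Both Schaefer's theorem and the limit $\eps\to0$ require $\bm S\mapsto F_{i,K,\sigma}(\bm S)$ to be continuous. Continuity of the logarithmic mean $\Lambda$ and of $a_\sigma$ does not give this, because the normalization of $c_{i,\sigma}$ through the splitting into $\mathcal{N}_1(\sigma,k)$ and $\mathcal{N}_2(\sigma,k)$ jumps.

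For $n=3$, take $c_K=(\frac13,\frac13,\frac13)$ and $c_{K,\sigma}=(\frac13+t,\frac12,\frac16-t)$.
\begin{itemize}
\item At $t=0$, the index $1$ lies in $\mathcal{N}_2$, so $c_{1,\sigma}=\frac13$.
\item For small $t\neq0$, $\mathcal{N}_2=\emptyset$, and $c_{1,\sigma}\to\frac13/\big(\frac13+\Lambda(\frac13,\frac12)+\Lambda(\frac13,\frac16)\big)$.
\item This limit exceeds $\frac13$, since $\Lambda$ lies strictly below the arithmetic mean.
\end{itemize}
Because $c_{i,\sigma}$ multiplies $a_\sigma\D_{K,\sigma}\big(P_c(S)+(\beta(S)-\beta(S^{k-1}))/\Delta t\big)$, which is nonzero for generic $S_K\neq S_{K,\sigma}$, the fixed-point map is discontinuous.

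The repair is simple: set $c_{i,\sigma}=\Lambda(c_{i,K},c_{i,K,\sigma})/\sum_{j=1}^n\Lambda(c_{j,K},c_{j,K,\sigma})$ with $\Lambda(x,x)=x$.
\begin{itemize}
\item This is continuous on the open simplex.
\item It agrees with the paper's choice whenever $\mathcal{N}_2(\sigma,k)=\emptyset$ or $\mathcal{N}_1(\sigma,k)=\emptyset$.
\item It satisfies both identities in \eqref{2.c}, because $\Lambda(x,y)(\log y-\log x)=y-x$ and $\sum_i(c_{i,K,\sigma}-c_{i,K})=0$.
\end{itemize}
With this change, Theorem~\ref{thm.dei} and the rest of your argument go through unchanged.
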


\begin{proof}
We first prove the existence of a discrete solution to the regularized problem
\begin{align}\label{2.regul}
  \frac{\m(K)}{\Delta t}(S_{i,K}^k - S_{i,K}^{k-1})
  + \sum_{\sigma\in\E_K}F_{i,K,\sigma}(\bm{S}^k) 
  = \eps\m(K)\mu_{i,K}^k,
\end{align}
where $\eps>0$, $F_{i,K,\sigma}(\bm{S}^k)$ is given by \eqref{2.eq2}, $S_{i,K}^k = S_i(\bm{\mu}_K^k)$, and $\bm{S}(\bm\mu)$ is the inverse of $\bm{S}\mapsto\bm\mu$. Let $\bm{S}^{k-1}\in V_{\Delta x}^n$ satisfy $\bm{S}^{k-1}\in\dom$. We introduce the fixed-point operator $G:V_{\Delta x}^n\to V_{\Delta x}^n$, $G(\bar{\bm{\mu}}) = \bm{\mu}$, where $\bm{\mu}$ is the unique solution to
\begin{align*}
  \eps\m(K)\mu_{i,K} = \frac{\m(K)}{\Delta t}
  (S_{i}(\bar{\bm\mu}_K) - S_{i,K}^{k-1}) + \sum_{\sigma\in\E_K}F_{i,K,\sigma}(\bm{S}(\bar{\bm\mu})), 
  \quad K\in\T,\ i=1,\ldots,n.
\end{align*}
By definition of the map $\bm{S}:\R^n\to\dom$, we have $\bm{S}(\bar{\bm\mu}_K)\in\dom$ for all $K\in\T$. The finite-dimensional space $V_{\Delta x}^n$ is endowed with the norm $\|\cdot\|_{0,2,\T}$ defined in Section \ref{sec.num}. The operator $G$ is well defined, and standard arguments show that it is continuous. We want to apply Schaefer's fixed-point theorem \cite[Sec.~9.2.2]{Eva98}. For this, let $\theta\in[0,1]$ and $\bm{\mu} = \theta G(\bm\mu)$, i.e., $\mu_i$ solves
\begin{align*}
  \eps\m(K)\mu_{i,K} = \theta\frac{\m(K)}{\Delta t}
  (S_{i}(\bm\mu_K) - S_{i,K}^{k-1}) + \theta\sum_{\sigma\in\E_K}F_{i,K,\sigma}(\bm{S}(\bm\mu)), 
  \quad K\in\T,\ i=1,\ldots,n.
\end{align*}
Proceeding as in Section \ref{sec.dei}, we can prove that
\begin{align}\label{2.dei}
  H_d(\bm{S}(\bm\mu)) &+ \eps\Delta t\|\bm\mu\|_{0,2,\T}^2 
  + \theta c_M\Delta t\sum_{\sigma\in\E}\tau_\sigma
  |\Pi(\D_{K,\sigma}\bm{\mu})|^2 \\
  &+ \theta\Delta t\sum_{\sigma\in\E}\tau_\sigma
  \D_{K,\sigma}P_c(S)\D_{K,\sigma}\beta(S) \le H_d(\bm{S}^{k-1}).
  \nonumber 
\end{align}
This shows that $\bm\mu$ is uniformly bounded (for fixed $\eps>0$). By Schaefer's fixed-point theorem, there exists a solution $\bm\mu^\eps:=\bm\mu$ to \eqref{2.regul}. We set $\bm{S}^\eps:=\bm{S}(\bm\mu^\eps)$. It holds that $\bm{S}^\eps_K\in\dom$ for all $K\in\T$.  

It remains to perform the limit $\eps\to 0$. The energy inequality and the explicit form of $\Psi$ yield the following bound that is uniform in $\eps$:
\begin{align*}
  \sum_{K\in\T}\m(K)\big((S_K^\eps)^{2-\gamma_2} 
  + (1-S_K^\eps)^{2-\gamma_1}\big) \le C.
\end{align*}
Since $\gamma_1$, $\gamma_2>2$, there exists $\kappa\in(0,1)$ such that $\kappa\le S_K^\eps\le 1-\kappa$ for all $K\in\T$, uniformly in $\eps$. These bounds show that $\|S_i^\eps\|_{0,\infty,\T}$ is bounded uniformly in $\eps$. Thus, there exists a subsequence (not relabeled) such that
\begin{align*}
  S_i^\eps\to S_i \quad\mbox{as }\eps\to 0.
\end{align*}

Furthermore, we deduce from the energy inequality that $(\Pi\bm\mu^\eps)$ is bounded in the discrete $H^1(\Omega)$ seminorm $|\cdot|_{1,2,\mathcal{T}}$. Since $\mu_i^D=0$ is given on $\Gamma_D$, we can apply the discrete Poincar\'e inequality to infer that $\Pi\bm\mu^\eps$ is uniformly bounded in $L^2(\Omega)$. Hence, up to a subsequence,
\begin{align}
  \Pi\bm\mu^\eps\to \bm{Q}\quad\mbox{as }\eps\to 0.
  \label{conv_Pi.mu}
\end{align}
Since $\Pi\bm\mu^\eps\in\operatorname{ran}(\Pi)$ for all $\eps>0$
and $\operatorname{ran}(\Pi)=\mbox{span}\{\mathbf 1\}^{\perp}$ is a closed subspace of $\R^n$, the convergence \eqref{conv_Pi.mu}
implies that $\bm Q\in\operatorname{ran}(\Pi)$. Consequently, there exists $\bm\mu\in\R^n$ such that $\bm Q=\Pi\bm\mu$. To identify $\bm\mu$, we proceed as in \cite{DMZ25}. We know that
\begin{align*}
  \D_{K,\sigma}&(\log S_i^\eps-\log S_i^D) 
  - \frac{1}{n}\sum_{j=1}^n\D_{K,\sigma}(\log S_j^\eps-\log S_j^D) \\
  &= \D_{K,\sigma}\bigg(\log\frac{S_i^\eps}{S^\eps}
  - \log\frac{S_i^D}{S^D} + \psi(S^\eps)\bigg)
  - \frac{1}{n}\sum_{j=1}^n\D_{K,\sigma}
  \bigg(\log\frac{S_j^\eps}{S^\eps} - \log\frac{S_j^D}{S^D}
  + \psi(S^\eps)\bigg) \\
  &= \D_{K,\sigma}\mu_i^\eps 
  - \frac{1}{n}\sum_{j=1}^n\D_{K,\sigma}\mu_j^\eps
  = (\Pi(\D_{K,\sigma}\bm\mu^\eps))_i
\end{align*}
and consequently $\log S_i^\eps-\log S_i^D-(1/n)\sum_{j=1}^n(\log S_j^\eps-\log S_j^D)$ is uniformly bounded in $L^2(\Omega)$. By \cite[Lemma 17]{DMZ25}, for any $\delta>0$, there exists $C_\delta>0$ such that
\begin{align*}
  \sum_{i=1}^n\bigg(\log S_{i,K}^\eps - \frac{1}{n}\sum_{j=1}^n
  \log S_{j,K}^\eps\bigg)^2 
  \ge C_\delta\bigg(\sum_{i=1}^n\log S_{i,K}^\eps\bigg) - \delta.
\end{align*}
Therefore, $\log S_{i,K}^\eps$ is uniformly bounded in $L^2(\Omega)$ as well. We obtain, up to a subsequence,
\begin{align*}
  \log S_i^\eps\to\log S_i\quad\mbox{as }\eps\to 0.
\end{align*}
We conclude that $S_i>0$ in $\Omega$. Thus, we can identify
\begin{align*}
  \mu_i = \log\frac{S_i}{S} - \log\frac{S_i^D}{S^D} + \psi(S) 
  \quad\mbox{for }i=1,\ldots,n.
\end{align*}
Inequality \eqref{2.dei} provides a uniform bound for $\sqrt\eps\mu_{i}^\eps$, which shows (up to a subsequence) that $\eps\mu_{i,K}\to 0$. Hence, we can perform the limit $\eps\to 0$ in \eqref{2.regul} with numerical fluxes \eqref{2.eq2} showing that $\bm{S}^k:=\bm{S}$ solves \eqref{2.eq1}--\eqref{2.eq2}. 
\end{proof}
 

\section{Uniqueness of a discrete solution}\label{sec.unique}

In this section, we prove the uniqueness of a solution to our scheme. 

\begin{theorem}\label{thm.unique}
Let Assumptions (A1)--(A4) hold and let $\bm{S}^0\in V_{\Delta x}^n$. Then there exists at most one solution $\bm{S}^k$ to \eqref{2.eq1}--\eqref{2.eq2}.
\end{theorem}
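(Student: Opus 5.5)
The plan is an $L^2$-contraction argument in which the two uniqueness-producing terms are the relative time term and the pseudo-parabolic term, both of order $1/\Delta t$ once the scheme is multiplied by $\Delta t$. Let $\bm{S}^k$ and $\hat{\bm S}^k$ be two solutions of \eqref{2.eq1}--\eqref{2.eq2} with the same $\bm{S}^{k-1}$, and let $\bm\mu^k$ and $\hat{\bm\mu}^k$ be the associated chemical potentials.

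\emph{Step 1: uniform bounds.} As in the proof of Theorem \ref{thm.ex}, the discrete energy inequality of Theorem \ref{thm.dei} applies to both solutions. Together with the explicit form \eqref{2.Psi} of $\Psi$, it gives $\kappa\le S_K^k,\hat S_K^k\le 1-\kappa$ with one $\kappa\in(0,1)$ for both. The argument with \cite[Lemma 17]{DMZ25} gives a lower bound $S_{i,K}^k,\hat S_{i,K}^k\ge\kappa'>0$, so all states lie in a compact set $\mathcal K\subset\dom$. On $\mathcal K$ the following hold, with constants depending on $\kappa,\kappa'$ and, through $\bm{S}^{k-1}$, on the mesh:
\begin{itemize}
\item the Hessian of $E$ is bounded below by some $c_E>0$;
\item $\beta$, $\psi$, $P_c$ are smooth, with $\beta'\ge b(\kappa)>0$;
\item $a_\sigma$ is bounded between two positive constants, since it is a difference quotient of $\beta$ over $\psi$;
\item $c_{i,\sigma}$, being built from logarithmic means of values bounded away from zero, is Lipschitz in $(\bm{S}_K,\bm{S}_{K,\sigma})$.
\end{itemize}
The map $\bm\mu\mapsto\bm{S}(\bm\mu)$ is a diffeomorphism on the corresponding set.

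\emph{Step 2: testing.} Multiply the difference of the two schemes (both multiplied by $\Delta t$) by $\mu_{i,K}^k-\hat\mu_{i,K}^k$, sum over $i$ and $K$, and apply \eqref{2.dibp}. The Dirichlet boundary terms vanish because both potentials take the same boundary values.

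The time term gives
\begin{align*}
  \sum_K\m(K)\Phi_K(\bm{S}_K-\hat{\bm S}_K)\cdot(\bm\mu_K-\hat{\bm\mu}_K)\ge c_\Phi c_E\|\bm{S}-\hat{\bm S}\|_{0,2,\T}^2 ,
\end{align*}
by strict convexity of $E$ on $\mathcal K$.

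For the mobility part I use the key identity from the proof of Theorem \ref{thm.dei}, $\sum_ic_{i,\sigma}a_\sigma\D_{K,\sigma}\mu_i=\D_{K,\sigma}\beta(S)$. Writing $\hat c_{i,\sigma}\hat a_\sigma=c_{i,\sigma}a_\sigma+(\hat c_{i,\sigma}\hat a_\sigma-c_{i,\sigma}a_\sigma)$, the diagonal contributions become
\begin{align*}
  \sum_\sigma\tau_\sigma\,\D_{K,\sigma}\big(\beta(S)-\beta(\hat S)\big)\,\D_{K,\sigma}\big(\beta(S)-\beta(\hat S)\big)
  =|\beta(S)-\beta(\hat S)|_{1,2,\T}^2 .
\end{align*}
This is the pseudo-parabolic dissipation, and it comes with weight $1$, i.e.\ of the same order as the time term after multiplying by $\Delta t$. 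The remaining contributions are the $P_c$-parts and the $M_{ij}$-parts, both multiplied by $\Delta t$, together with the coefficient-difference cross terms. The $P_c$-terms on their diagonal produce $\Delta t\sum\tau_\sigma\D(\beta(S)-\beta(\hat S))\D(P_c(S)-P_c(\hat S))$. The frozen-coefficient $M$-terms are bounded below by $c_M\Delta t|\Pi\D(\bm\mu-\hat{\bm\mu})|^2$ by (A2).

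\emph{Step 3: absorbing the cross terms (main obstacle).} All cross terms are bilinear in a coefficient difference, which is Lipschitz and bounded by $C|\bm{S}-\hat{\bm S}|$ on $\mathcal K$, times a discrete gradient of one of the solutions. Since both solutions lie in $\mathcal K$, their discrete gradients are bounded by mesh-dependent constants. This is where the mesh dependence mentioned in Remark \ref{rem.mesh} enters, rather than any restriction on $\Delta t$.

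The crucial point is to avoid a condition like $\Delta t\,C<c$. I will do this by never comparing an $O(\Delta t)$ term with the $O(1)$ time term. Instead:
\begin{itemize}
\item the cross terms coming from the pseudo-parabolic flux (which are $O(1)$) are absorbed into $|\beta(S)-\beta(\hat S)|_{1,2,\T}^2$ and the time term, using Young's inequality and $\beta'\ge b(\kappa)$, so that $|\beta(S)-\beta(\hat S)|\gtrsim|S-\hat S|$;
\item the cross terms of order $\Delta t$ are absorbed into the $\Delta t$-weighted dissipations $c_M\Delta t|\Pi\D(\bm\mu-\hat{\bm\mu})|^2$ and $\Delta t\,\D\beta\,\D P_c$, plus $\Delta t$ times a multiple of $\|\bm{S}-\hat{\bm S}\|^2$.
\end{itemize}
The last remainder must then be controlled without smallness, which is the delicate part. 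My first attempt is to bound it by the $O(1)$ pseudo-parabolic dissipation through the discrete Poincar\'e inequality for $\beta(S)-\beta(\hat S)$, which vanishes on $\E_{\rm ext}^D$. The hope is that the $\Delta t$-factor is matched by the $1/\Delta t$ scaling inherent in the relaxation term $\pa_t\beta$ of \eqref{2.eq2}. If this closes, we obtain $\|\bm{S}-\hat{\bm S}\|_{0,2,\T}^2\le0$ and hence $\bm{S}^k=\hat{\bm S}^k$.
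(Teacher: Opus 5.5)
Your Steps 1--2 match the paper: the same test function $\bm\mu-\hat{\bm\mu}$, the same strong-convexity bound for the time term, and the same identity producing $|\beta(S)-\beta(\hat S)|_{1,2,\T}^2$. The proposal then stops exactly where uniqueness is decided, and the mechanism you hope for cannot close it. After multiplying the scheme by $\Delta t$, the relaxation term already has weight $1$. There is no leftover factor $1/\Delta t$ to ``match'' the remainder $C\Delta t\|\bm S-\hat{\bm S}\|_{0,2,\T}^2$, whose constant $C$ (Lipschitz constants times discrete gradients) is mesh dependent and not small. None of the other terms can absorb it: \begin{itemize} \item The discrete Poincar\'e inequality for $\beta(S)-\beta(\hat S)$ controls only the total saturation $S-\hat S$, not the fractions $S_i/S$ through which $c_{i,\sigma}$, $M_{ij}$ and $\bm\mu$ enter the remainder. \item The term $c_M\Delta t|\Pi\D_{K,\sigma}(\bm\mu-\hat{\bm\mu})|^2$ controls the fractions with weight $\Delta t$, but with a constant that need not dominate $C$. \item The term $\Delta t\,\D_{K,\sigma}(\beta(S)-\beta(\hat S))\,\D_{K,\sigma}(P_c(S)-P_c(\hat S))$ is not a dissipation: monotonicity of $\beta$ and $P_c$ says nothing about the sign of a product of double differences. \end{itemize} So the remainder can only be absorbed by the $O(1)$ time term, i.e.\ under $C\Delta t<c_\Phi c_E$ --- precisely the comparison you wanted to avoid. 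The paper does not avoid it either. It closes by choosing $\Delta t$ sufficiently small with $\Delta t/(\Delta x)^2$ bounded, and Remark \ref{rem.mesh} states exactly this restriction; it is not an alternative to a condition on $\Delta t$.

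Even with $\Delta t$ small, your first bullet in Step 3 fails. Among the cross terms from the relaxation part of the flux is $-\D_{K,\sigma}(\beta(S)-\beta(\hat S))\sum_{i}(c_{i,\sigma}a_\sigma-\hat c_{i,\sigma}\hat a_\sigma)\D_{K,\sigma}\hat\mu_i$. It is quadratic in the difference, but its coefficient is of order $L\max_\sigma|\D_{K,\sigma}\hat{\bm\mu}|$ ($L$ a Lipschitz constant of $c_{i,\sigma}a_\sigma$). It carries no factor $\Delta t$ and has no reason to be small compared with $c_\Phi c_E$, so after Young's inequality an $O(1)$ multiple of $\|\bm S-\hat{\bm S}\|_{0,2,\T}^2$ remains that nothing absorbs. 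The same piece sits in the paper's $I_{412}$: $\Delta t(G_\sigma^k-\bar G_\sigma^k)$ contains $\D_{K,\sigma}(\beta(S^k)-\beta(\bar S^k))$ with no factor $\Delta t$, so the bound $I_{412}\ge-C(\kappa)\Delta t\|\bm S^k-\bar{\bm S}^k\|_{0,2,\T}^2$ needs an extra step too.

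The missing idea is to decouple the total saturation. \begin{itemize} \item Summing the scheme over $i$ and using $\sum_i c_{i,\sigma}^k=1$ and $\sum_iM_{ij}=0$ gives a closed scalar scheme for $S^k$, because $a_\sigma^k$ depends only on $S^k$. \item Test the difference of the two scalar equations with $\beta(S^k)-\beta(\hat S^k)$. The coefficient difference $a_\sigma^k-\hat a_\sigma^k$ then multiplies only $\Delta t\hat G_\sigma^k=\Delta t\,\D_{K,\sigma}P_c(\hat S^k)+\D_{K,\sigma}(\beta(\hat S^k)-\beta(S^{k-1}))$. On a fixed mesh this is $O(\sqrt{\Delta t})$, by the one-step version of the proof of Proposition \ref{prop.beta} combined with Theorem \ref{thm.dei}. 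Hence $S^k=\hat S^k$ for small $\Delta t$. \item The bad term now vanishes identically, and your $\bm\mu-\hat{\bm\mu}$ test closes for small $\Delta t$. Every remaining coefficient difference is multiplied either by $\Delta t$ or by the common factor $\Delta t G_\sigma^k$. \end{itemize}

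Finally, the Lipschitz continuity of $c_{i,\sigma}$ you assume in Step 1 fails for the definition with the $\mathcal N_1/\mathcal N_2$ split. There $c_{i,\sigma}$ jumps when an index changes sets, because the logarithmic means $\Lambda_i$ of $c_{i,K}$, $c_{i,K,\sigma}$ satisfy $\sum_i\Lambda_i<1$ unless the two fraction vectors coincide. The normalization $c_{i,\sigma}=\Lambda_i/\sum_j\Lambda_j$ is smooth and still satisfies \eqref{2.c}.
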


\begin{proof}
Let $\bm{S}^k$ and $\bar{\bm{S}}^k$ be two discrete solutions at time $t_k$ with the respective potentials $\bm{\mu}^k$ and $\bar{\bm{\mu}}^k$. We assume that they share the same previous state, i.e.\ $\bm{S}^{k-1}=\bar{\bm{S}}^{k-1}$. We notice that, since we consider a fully discrete, finite-dimensional setting on a fixed mesh $\mathcal{T}$, all discrete spatial gradients are bounded. Moreover, the solutions are non-degenerate in the sense that, by Theorem \ref{thm.ex}, there exists $\kappa\in(0,1)$ such that $\kappa\le S_{i,K}^k\le 1-\kappa$ for all $K\in\mathcal{T}$ and $i=1,\ldots,n$. 

We take the difference of equations \eqref{2.eq1}, satisfied for $S_{i,K}^k$ and $\bar{S}_{i,K}^k$, multiply it by $\Delta t (\mu_{i,K}^k - \bar{\mu}_{i,K}^k)$, and sum it over $K\in\mathcal{T}$ and $i=1,\ldots,n$, leading to $I_3+I_4=0$, where
\begin{align*}
  I_3 &= \sum_{i=1}^n\sum_{K\in\mathcal{T}}\m(K)\Phi_K
  (S_{i,K}^k-\bar{S}_{i,K}^k)(\mu_{i,K}^k-\bar\mu_{i,K}^{k}), \\
  I_4 &= \Delta t\sum_{i=1}^n\sum_{K\in\mathcal{T}}\sum_{\sigma\in\mathcal{E}_K}
  \big(F_{i,K,\sigma}(\bm{S}^k) - F_{i,K,\sigma}(\bar{\bm{S}}^k)\big)
  (\mu_{i,K}^k-\bar\mu_{i,K}^{k}). 
\end{align*}
Starting with $I_3$, we observe that $\mu_{i,K}^k = (\pa E/\pa S_i)(\bm{S}_K^k)$ and hence, the sum over the components can be written in terms of the gradient of the free energy,
\begin{align*}
  \sum_{i=1}^n(S_{i,K}^k-\bar{S}_{i,K}^k)(\mu_{i,K}^k-\bar\mu_{i,K}^{k})
  = (\bm{S}_K^k-\bar{\bm{S}}_K^k)\cdot\big(\na E(\bm{S}_K^k)
  - \na E(\bar{\bm{S}}_K^k)\big).
\end{align*}
Since $0<\kappa\le S_K^k$, $\bar{S}_K^k\le 1-\kappa<1$, the logarithmic structure of the energy ensures that $\bm{S}\mapsto E(\bm{S})$ is strongly convex. This yields a constant $c_E>0$ such that
\begin{align*}
  (\bm{S}_K^k-\bar{\bm{S}}_K^k)\cdot\big(\na E(\bm{S}_K^k)
  - \na E(\bar{\bm{S}}_K^k)\big)
  \ge c_E|\bm{S}_K^k-\bar{\bm{S}}_K^k|.
\end{align*}
Then, together with the positive lower bound for $\Phi_K$, we obtain
\begin{align*}
  I_3 \ge c_Ec_\Phi\sum_{K\in\mathcal{T}}
  \m(K)|\bm{S}_K^k-\bar{\bm{S}}_K^k|^2 
  = C\|\bm{S}^k-\bar{\bm{S}}^k\|_{0,2,\mathcal{T}}^2.
\end{align*}

We turn to the second term $I_4$. By discrete integration by parts \eqref{2.dibp},
\begin{align*}
  I_4 = -\Delta t\sum_{i=1}^n\sum_{\sigma\in\mathcal{E}}
  \big(F_{i,K,\sigma}(\bm{S}^k) - F_{i,K,\sigma}(\bar{\bm{S}^k})\big)
  \D_{K,\sigma}(\mu_i^k-\bar{\mu}_i^k).
\end{align*}
We split $I_4 = I_{41}+I_{42}$, where
\begin{align*}
  I_{41} &= \Delta t\sum_{i=1}^n\sum_{\sigma\in\mathcal{E}}\tau_\sigma
  \bigg\{c_{i,\sigma}^k a_\sigma^k\D_{K,\sigma}
  \bigg(P_c(S^k) + \frac{\beta(S^k)-\beta(S^{k-1})}{
  \Delta t}\bigg) \\
  &\phantom{xx}- \bar{c}_{i,\sigma}^k\bar{a}_\sigma^k\D_{K,\sigma}
  \bigg(P_c(\bar S^k) + \frac{\beta(\bar S^k)-\beta(\bar S^{k-1})}{
  \Delta t}\bigg)\bigg\}\D_{K,\sigma}(\mu_i^k-\bar\mu_i^{k}), \\
  I_{42} &= \Delta t\sum_{i=1}^n\sum_{\sigma\in\mathcal{E}}\tau_\sigma
  \sum_{j=1}^n\bigg(M_{ij}(\bm{S}_\sigma^k)\D_{K,\sigma}\mu_j^k
  - M_{ij}(\bar{\bm{S}}_\sigma^k)\D_{K,\sigma}\bar\mu_j^k\bigg)
  \D_{K,\sigma}(\mu_i^k-\bar\mu_i^{k}),
\end{align*}
where $\bar{c}_{i,\sigma}^k$ and $\bar{a}_\sigma^k$ are evaluated at $\bar{\bm{S}}^k$. We write $I_{42} = I_{421} + I_{422}$, where
\begin{align*}
  I_{421} &= \Delta t\sum_{\sigma\in\mathcal{E}}\tau_\sigma\sum_{i,j=1}^n
  M_{ij}(\bm{S}_\sigma^k)\D_{K,\sigma}(\mu_j^k-\bar\mu_j^{k})
  \D_{K,\sigma}(\mu_i^k-\bar\mu_i^{k}), \\
  I_{422} &= \Delta t\sum_{\sigma\in\mathcal{E}}\tau_\sigma\sum_{i,j=1}^n
  \big(M_{ij}(\bm{S}_\sigma^k) - M_{ij}(\bar{\bm{S}}_\sigma^k)\big)
  \D_{K,\sigma}\bar\mu_j^k\D_{K,\sigma}(\mu_i^k-\bar\mu_i^{k}).
\end{align*}
By the hypocoercivity Assumption (A2), 
\begin{align*}
  I_{421} \ge c_M\Delta t\sum_{\sigma\in\mathcal{E}}\tau_\sigma
  |\Pi\D_{K,\sigma}(\bm\mu^k-\bar{\bm{\mu}}^k)|^2.
\end{align*}
Taking into account $\sum_{i=1}^n M_{ij}=0$ as a consequence of Assumption (A2), we have
\begin{align*}
  \sum_{i=1}^n\big(M_{ij}(\bm{S}_\sigma^k) 
  - M_{ij}(\bar{\bm{S}}_\sigma^k)\big) = 0.
\end{align*}
This allows us to replace the plain difference with its projection:
\begin{align*}
  I_{422} = \Delta t\sum_{\sigma\in\mathcal{E}}\tau_\sigma\sum_{i,j=1}^n
  \big(M_{ij}(\bm{S}_\sigma^k) - M_{ij}(\bar{\bm{S}}_\sigma^k)\big)
  \D_{K,\sigma}\bar\mu_j^k
  \big(\Pi\D_{K,\sigma}(\bm\mu^k-\bar{\bm{\mu}}^{k})\big)_i.
\end{align*}
Then, using the local Lipschitz continuity of $M_{ij}$ and Young's inequality,
\begin{align*}
  I_{422} \ge -\frac{c_M}{2}\Delta t\sum_{\sigma\in\mathcal{E}}
  \tau_\sigma|\Pi\D_{K,\sigma}(\bm\mu^k-\bar{\bm{\mu}}^{k})|^2
  - C(c_M)\Delta t\sum_{\sigma\in\mathcal{E}}\tau_\sigma
  |\D_{K,\sigma}\bar{\bm{\mu}}^k|^2
  |\bm{S}_\sigma^k-\bar{\bm{S}}_\sigma^k|^2.
\end{align*}
The first term on the right-hand side is absorbed by $I_{421}$. Thanks to the bounds $\kappa\le S_K^k$, $\bar S_K^k\le 1-\kappa$, the functions defining the chemical potentials stay strictly away from the singularities. Therefore, $\bar{\bm{\mu}}^k$ is bounded in $L^\infty(\Omega)$ uniformly in $\sigma$, and we infer that
\begin{align*}
  \max_{\sigma\in\mathcal{E}}|\D_{K,\sigma}\bar{\bm{\mu}}^k|^2
  \le C(\kappa),
\end{align*} 
for some $C(\kappa)>0$. This shows that
\begin{align*}
  I_{42} \ge \frac{c_M}{2}\Delta t\sum_{\sigma\in\mathcal{E}}
  \tau_\sigma|\Pi\D_{K,\sigma}(\bm\mu^k-\bar{\bm{\mu}}^{k})|^2
  - C(c_M,\kappa)\Delta t\sum_{\sigma\in\mathcal{E}}\tau_\sigma
  |\bm{S}_\sigma^k-\bar{\bm{S}}_\sigma^k|^2.
\end{align*}

Next, we estimate $I_{41}$. Setting
\begin{align*}
  G_\sigma^k = \D_{K,\sigma}\bigg(P_c(S^k) 
  + \frac{\beta(S^k)-\beta(S^{k-1})}{\Delta t}\bigg), \quad
 \bar G_\sigma^k =  \D_{K,\sigma}\bigg(P_c(\bar S^k) 
  + \frac{\beta(\bar S^k)-\beta(\bar S^{k-1})}{\Delta t}\bigg),
\end{align*}
we can write
\begin{align*}
  & I_{41} = \Delta t\sum_{i=1}^n\sum_{\sigma\in\mathcal{E}}\tau_\sigma
  \big(c_{i,\sigma}^k a_\sigma^k G_\sigma^k
  - \bar c_{i,\sigma}^k \bar a_\sigma^k \bar G_\sigma^k\big)
  \D_{K,\sigma}(\mu_i^k-\bar\mu_i^k) =: I_{411} + I_{412},
  \quad\mbox{where} \\
  & I_{411} = \Delta t\sum_{\sigma\in\mathcal{E}}\tau_\sigma\sum_{i=1}^n
  \big(c_{i,\sigma}^k a_\sigma^k\D_{K,\sigma}\mu_i^k
  - \bar c_{i,\sigma}^k\bar a_\sigma^k\D_{K,\sigma}\bar\mu_i^k\big)
  (G_\sigma^k - \bar G_\sigma^k), \\
  & I_{412} = -\Delta t\sum_{\sigma\in\mathcal{E}}\tau_\sigma\sum_{i=1}^n
  (c_{i,\sigma}^k a_\sigma^k - \bar c_{i,\sigma}^k \bar a_\sigma^k)
  \big(G_\sigma^k\D_{K,\sigma}\bar\mu_i^k
  - \bar G_\sigma^k\D_{K,\sigma}\mu_i^k\big).
\end{align*}
The definition of our numerical scheme implies that
\begin{align*}
  \sum_{i=1}^n c_{i,\sigma}^k a_\sigma^k\D_{K,\sigma}\mu_i^k
  = \D_{K,\sigma}\beta(S^k), \quad
   \sum_{i=1}^n\bar c_{i,\sigma}^k\bar a_\sigma^k\D_{K,\sigma}
  \bar\mu_i^k = \D_{K,\sigma}\beta(\bar S^k).
\end{align*}
Inserting the definitions of $G_\sigma^k$ and $\bar G_\sigma^k$, this gives
\begin{align*}
  I_{411} &= \Delta t\sum_{\sigma\in\mathcal{E}}\tau_\sigma
  \D_{K,\sigma}\big(\beta(S^k)-\beta(\bar S^k)\big)
  (G_\sigma^k - \bar G_\sigma^k) \\
  &= \Delta t\sum_{\sigma\in\mathcal{E}}\tau_\sigma
  \D_{K,\sigma}\big(\beta(S^k)-\beta(\bar S^k)\big)
  \D_{K,\sigma}\big(P_c(S^k)-P_c(\bar S^k)\big) \\
  &\phantom{xx}+ \sum_{\sigma\in\mathcal{E}}\tau_\sigma
  \big|\D_{K,\sigma}\big(\beta(S^k)-\beta(\bar S^k)\big)\big|^2 \ge 0,
\end{align*}
where the last inequality follows from the monotonicity of $P_c$ and $\beta$. For the term $I_{412}$, we observe that the functions are uniformly Lipschitz continuous and the factors are uniformly bounded, leading to
\begin{align*}
  I_{412} \ge -C(\kappa)\Delta t\|\bm{S}^k-\bar{\bm{S}}^k\|_{0,2,\mathcal{T}}^2.
\end{align*}
Consequently,
\begin{align*}
  I_{41} \ge - C(\kappa)\Delta t
  \|\bm{S}^k-\bar{\bm{S}}^k\|_{0,2,\mathcal{T}}^2.
\end{align*}
Collecting the estimates for $I_{41}$ and $I_{42}$ as well as $I_3$, we arrive at
\begin{align}\label{6.aux}
  C_0\|&\bm{S}^k-\bar{\bm{S}}^k\|_{0,2,\mathcal{T}}^2
  + \frac{c_M}{2}\Delta t\sum_{\sigma\in\mathcal{E}}
  \tau_\sigma|\Pi\D_{K,\sigma}(\bm\mu^k-\bar{\bm{\mu}}^{k})|^2 \\
  &\le C\Delta t\sum_{\sigma\in\mathcal{E}}\tau_\sigma
  |\bm{S}_\sigma^k-\bar{\bm{S}}_\sigma^k|^2 + C\Delta t
  \|\bm{S}^k-\bar{\bm{S}}^k\|_{0,2,\mathcal{T}}^2. \nonumber
\end{align}

It remains to estimate the first term on the right-hand side. Since the mesh is fixed, we may set $\mu:=\max_{K\in\mathcal{T}}\max_{\sigma\in\mathcal{E}_K} \tau_\sigma/(\m(K))^{1-1/d}$. Hence, since $\bm{S}_{\sigma}^k=\frac12(\bm{S}_K^k+\bm{S}_{K,\sigma}^k)$,
\begin{align*}
  C\Delta t\sum_{\sigma\in\mathcal{E}}\tau_\sigma
  |\bm{S}_\sigma^k-\bar{\bm{S}}_\sigma^k|^2
  &\le \frac{C\mu\Delta t}{\min_{K\in\mathcal{T}}(\m(K))^{1/d}}
  \sum_{K\in\mathcal{T}}\m(K)|\bm{S}_K^k-\bar{\bm{S}}_K^k|^2 \\
  &\le C\mu\frac{\Delta t}{\Delta x}
  \|\bm{S}^k-\bar{\bm{S}}^k\|_{0,2,\mathcal{T}}^2.
\end{align*}
For any fixed mesh, we can set $C_1:=\Delta t/(\Delta x)^2$. We conclude from \eqref{6.aux} that
\begin{align*}
  \big(C_0 - C\Delta t - C\mu\sqrt{C_1\Delta t}\big)
  \|\bm{S}^k-\bar{\bm{S}}^k\|_{0,2,\mathcal{T}}^2 \le 0.
\end{align*}
Then, choosing $\Delta t>0$ sufficiently small, we find that $\|\bm{S}^k-\bar{\bm{S}}^k\|_{0,2,\mathcal{T}}=0$ and hence $\bm{S}^k=\bar{\bm{S}}^k$ for $k\in\N$. This completes the proof.
\end{proof}

\begin{remark}\label{rem.mesh}\rm
The uniqueness proof remains valid under mesh refinement, provided that the ratio $\Delta t/(\Delta x)^2$ remains bounded. Such a condition is natural for second-order parabolic problems. However, the argument does not extend to the continuous setting, since the non-degeneracy constant $\kappa>0$ is expected to vanish in the limit $(\Delta x,\Delta t)\to 0$.
\end{remark}


\section{Stability estimates}\label{sec.stab}

We derive estimates uniform in $\eta=\max\{\Delta x,\Delta t\}$. To this end, we sum the discrete energy inequality in Theorem \ref{thm.dei} over $k=1,\ldots,j$ for $j\le N$:
\begin{align*}
  H_d(\bm{S}^j) + c_M\Delta t\sum_{k=1}^j\sum_{\sigma\in\E}\tau_\sigma
  |\Pi(\D_{K,\sigma}\bm\mu^k)|^2
  + \Delta t\sum_{k=1}^j\sum_{\sigma\in\E}
  \tau_\sigma\D_{K,\sigma}P_c(S^k)
  \D_{K,\sigma}\beta(S^k) \le H(\bm{S}^0).
\end{align*}
We obtain the following bounds uniformly in $\eta$:
\begin{equation}\label{5.est1}
\begin{aligned}
  \sup_{k=1,\ldots,N}\big(\|(S^k)^{2-\gamma_2}\|_{0,1,\T}
  + \|(1-S^k)^{2-\gamma_1}\|_{0,1,\T}\big) &\le C, \\
  \Delta t\sum_{k=1}^N|\Pi(\bm\mu^k)|_{1,2,\T}^2 
  + \sup_{k=1,\ldots,N}|\beta(S^k)|_{1,2,\T}^2 &\le C.
\end{aligned}
\end{equation}

We first prove a result that is essential for the additional stability estimate, as it provides a uniform bound on the relaxation coefficient 
$\beta'(S)=b(S)$, enabling the control of the discrete time derivative of $\beta(S)$ in Proposition~\ref{prop.beta}.

\begin{lemma}\label{lem.beta}
Let $\gamma_0>\gamma_2$. Then the function $\beta'=b$, defined in Assumption (A3), is bounded in $[0,1]$.
\end{lemma}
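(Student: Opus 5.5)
Since $b$ is continuous on the open interval $(0,1)$ (the denominator $S^{\gamma_0}+(1-S)^{\gamma_1}$ is strictly positive for all $S\in[0,1]$), boundedness reduces to controlling $b$ near the endpoints $S=0$ and $S=1$. I will rewrite
\begin{align*}
  b(S) = \frac{S^{\gamma_0}}{S^{\gamma_0}+(1-S)^{\gamma_1}}
  + \frac{S^{\gamma_0-\gamma_2}(1-S)^{\gamma_1}}{S^{\gamma_0}+(1-S)^{\gamma_1}}
  =: b_1(S) + b_2(S)
\end{align*}
and bound each piece separately on $[0,1]$. The term $b_1$ is trivially in $[0,1]$, since its numerator is at most its denominator.

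For $b_2$, the only potential singularity is the factor $S^{-\gamma_2}$ at $S=0$, which after multiplication by $S^{\gamma_0}$ becomes $S^{\gamma_0-\gamma_2}$. The hypothesis $\gamma_0>\gamma_2$ makes this exponent positive, so $S^{\gamma_0-\gamma_2}\le 1$ on $[0,1]$. Moreover $(1-S)^{\gamma_1}\le S^{\gamma_0}+(1-S)^{\gamma_1}$, so $b_2(S)\le S^{\gamma_0-\gamma_2}\le 1$. Together with $b_1\le1$ this gives $0\le b(S)\le 2$ for $S\in(0,1]$, and $b$ extends continuously to $S=0$ with $b(0)=0$ (both pieces vanish there since $\gamma_0>0$ and $\gamma_0-\gamma_2>0$).

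I do not expect a real obstacle; the only point needing care is the endpoint $S=0$, where the definition of $b$ contains $S^{-\gamma_2}$. This is handled by combining it with $S^{\gamma_0}$ before estimating, which is exactly where $\gamma_0>\gamma_2$ enters. At $S=1$ nothing is singular: the denominator equals $1$ and $b(1)=1$. Nonnegativity is immediate from the formula, so $\beta'=b\in[0,2]$ on $[0,1]$.
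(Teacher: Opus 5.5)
Your proof is correct, but it takes a different route from the paper's. The paper writes $b(S)=(r(S)+S^{\gamma_0-\gamma_2})/(r(S)+1)$ with $r(S)=S^{\gamma_0}(1-S)^{-\gamma_1}$. It then differentiates and shows $b'>0$ on $(0,1)$, so boundedness follows from monotonicity together with $b(0)=0$ and $b(1)=1$. You instead bound the two summands directly, using only $S^{\gamma_0-\gamma_2}\le 1$ and the fact that $S^{\gamma_0}$ and $(1-S)^{\gamma_1}$ are each dominated by the denominator.

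Your argument is shorter and avoids calculus. It also needs only $\gamma_0\ge\gamma_2$; for $\gamma_0=\gamma_2$ one even has $b\equiv 1$. The paper's derivative argument, by contrast, uses the strict inequality to get $b'>0$.

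In exchange, the paper's route gives more than boundedness. It yields the sharp range $0\le b\le 1$ and shows that $b$ is increasing, a property the introduction explicitly attributes to this lemma. You can recover the sharp bound without splitting. Since $S^{\gamma_0-\gamma_2}\le 1$, the numerator $S^{\gamma_0}+S^{\gamma_0-\gamma_2}(1-S)^{\gamma_1}$ is at most the denominator $S^{\gamma_0}+(1-S)^{\gamma_1}$, so $b\le 1$. Your constant $2$ is in any case enough for the only downstream use, namely finiteness of $\|\beta'\|_{L^\infty(0,1)}$ in Proposition~\ref{prop.beta}.
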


\begin{proof}
Set $r(S)=S^{\gamma_0}(1-S)^{-\gamma_1}$. Then
\begin{align*}
  b(S) = \frac{r(S)+S^{\gamma_0-\gamma_2}}{r(S)+1} 
  \quad\mbox{for }0\le S<1.
\end{align*}
A computation shows that
\begin{align*}
  \frac{r'(S)}{r(S)} = \frac{\gamma_0}{S} + \frac{\gamma_2}{1-S} > 0,
\end{align*}
so $r(S)$ is increasing. Another calculation yields
\begin{align*}
  b'(S) = \frac{1}{(r(S)+1)^2}\big((\gamma_0-\gamma_2)
  S^{\gamma_0-\gamma_2-1}(r(S)+1) + (1-S^{\gamma_0-\gamma_2})r'(S)\big)
  > 0,
\end{align*}
where we use $\gamma_0-\gamma_2>0$. Thus, $b(S)$ is increasing, and since $b(0)=0$ and $b(1)=1$, we conclude the boundedness of $b$ in $[0,1]$. 
\end{proof}

\begin{proposition}[Uniform bounds for the discrete time derivative of $\beta(S^k)$]\label{prop.beta}
Let Assumptions (A1)--(A4) hold. Then there exists a constant $C>0$ depending on $\bm{S}^0$ such that
\begin{align}\label{5.est2}
  \sum_{K\in\T}
  \m(K)\bigg|\frac{\beta(S_K^k)-\beta(S_K^{k-1})}{\Delta t}\bigg|^2
  + \sum_{\sigma\in\E}\tau_\sigma a_\sigma^k
  \bigg|\D_{K,\sigma}\bigg(\frac{\beta(S_K^k)-\beta(S_K^{k-1})}{\Delta t}
  \bigg)\bigg|^2 \le C.
\end{align}
\end{proposition}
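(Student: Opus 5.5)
We sum the scheme \eqref{2.eq1}--\eqref{2.eq2} over $i=1,\ldots,n$ to obtain a scalar equation for the saturation $S^k$, and then test it with the discrete time derivative $\delta_K^k:=(\beta(S_K^k)-\beta(S_K^{k-1}))/\Delta t$. This mirrors the continuous derivation of \eqref{1.beta} in Appendix \ref{sec.app}.

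\textbf{Step 1 (summed scheme).} By (A2), $\sum_{i=1}^n M_{ij}=0$, and by \eqref{2.c}, $\sum_{i=1}^n c_{i,\sigma}^k=1$. Summing \eqref{2.eq1} over $i$ therefore removes the cross-diffusion part and gives
\begin{align*}
  \frac{\m(K)}{\Delta t}\Phi_K(S_K^k-S_K^{k-1})
  - \sum_{\sigma\in\E_K}\tau_\sigma a_\sigma^k
  \D_{K,\sigma}\big(P_c(S^k)+\delta^k\big) = 0 .
\end{align*}
We multiply by $\delta_K^k$, sum over $K\in\T$, and apply the discrete integration by parts \eqref{2.dibp}. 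The Dirichlet term drops out because $\beta(S^D)$ does not depend on time. This yields
\begin{align*}
  \sum_{K}\m(K)\Phi_K\frac{S_K^k-S_K^{k-1}}{\Delta t}\,\delta_K^k
  + \sum_{\sigma}\tau_\sigma a_\sigma^k|\D_{K,\sigma}\delta^k|^2
  = -\sum_{\sigma}\tau_\sigma a_\sigma^k\D_{K,\sigma}P_c(S^k)\,
  \D_{K,\sigma}\delta^k .
\end{align*}

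\textbf{Step 2 (lower bound of the first term).} By the mean value theorem, $\beta(S_K^k)-\beta(S_K^{k-1})=b(\xi)(S_K^k-S_K^{k-1})$ for some intermediate point $\xi$. Lemma \ref{lem.beta} gives $0\le b\le 1$, and (A1) gives $\Phi_K\ge c_\Phi$. Hence the first term is bounded below by $c_\Phi\sum_K\m(K)|\delta_K^k|^2$, which is the first quantity in \eqref{5.est2}.

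\textbf{Step 3 (right-hand side, the main obstacle).} We apply Young's inequality to the right-hand side and absorb half of it into the term $\sum_\sigma\tau_\sigma a_\sigma^k|\D_{K,\sigma}\delta^k|^2$. What remains is $\frac12\sum_\sigma\tau_\sigma a_\sigma^k|\D_{K,\sigma}P_c(S^k)|^2$. We control it with the discrete analogue of $P_c'\le C\psi'$ from (A4): integrating this inequality between $S_K^k$ and $S_{K,\sigma}^k$ gives $|\D_{K,\sigma}P_c|\le C|\D_{K,\sigma}\psi|$. Together with the definition \eqref{2.quot} of $a_\sigma^k$, this produces the chain rule
\begin{align*}
  a_\sigma^k|\D_{K,\sigma}P_c(S^k)|^2
  = \frac{\D_{K,\sigma}\beta(S^k)}{\D_{K,\sigma}\psi(S^k)}|\D_{K,\sigma}P_c(S^k)|^2
  \le C\,\D_{K,\sigma}P_c(S^k)\,\D_{K,\sigma}\beta(S^k).
\end{align*}
So everything reduces to bounding $\sum_\sigma\tau_\sigma\D_{K,\sigma}P_c(S^k)\D_{K,\sigma}\beta(S^k)$ by a constant depending only on $\bm S^0$, and this step carries the whole difficulty.

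The energy inequality of Theorem \ref{thm.dei} controls this dissipation quantity only after multiplication by $\Delta t$ and summation in time. A per-step bound cannot be taken directly from it. To obtain a bound uniform in $k$, I would first try to exploit the nonincrease of this dissipation quantity along the scheme. Concretely, I would test the difference of the summed equations at steps $k$ and $k-1$ with $P_c(S^k)-P_c(S^{k-1})$, in the spirit of the second energy estimate, and use monotonicity of $P_c$ and $\beta$ to show that $\sum_\sigma\tau_\sigma\D P_c(S^k)\D\beta(S^k)$ is bounded by its value at $k=0$. Its value at $k=0$ is finite and depends only on $\bm S^0$, since $S^0$ stays away from $0$ and $1$ by (A1). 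As a fallback, I would bound this quantity using the uniform bounds \eqref{5.est1}, namely $\sup_k|\beta(S^k)|_{1,2,\T}$ together with the integrability of $(S^k)^{2-\gamma_2}$ and $(1-S^k)^{2-\gamma_1}$, and the restrictions $p_0\le\gamma_2$, $p_1\le\gamma_1$ from (A4).

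\textbf{Conclusion.} Once this quantity is bounded, combining Steps 1--3 gives \eqref{5.est2} with $C$ depending on $c_\Phi$, on the constant in $P_c'\le C\psi'$, and on $\bm S^0$.
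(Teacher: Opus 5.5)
Your Steps 1--3 coincide with the paper's proof:
\begin{itemize}
\item summing over $i$ using $\sum_i M_{ij}=0$ and $\sum_i c_{i,\sigma}^k=1$, and testing with $\delta_K^k$;
\item the lower bound from $0\le b\le 1$ and $\Phi_K\ge c_\Phi$;
\item Young's inequality and the discrete chain rule $a_\sigma^k|\D_{K,\sigma}P_c(S^k)|^2\le C\,\D_{K,\sigma}P_c(S^k)\,\D_{K,\sigma}\beta(S^k)$. The paper gets the ratio bound in Lemma \ref{lem.aux} by a Rolle argument, which is equivalent to your integration of $P_c'\le C\psi'$.
\end{itemize}

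The gap is the final step. You need $D^k:=\sum_{\sigma\in\E}\tau_\sigma\D_{K,\sigma}P_c(S^k)\,\D_{K,\sigma}\beta(S^k)\le C$ uniformly in $k$ and in the mesh. Neither of your routes gives this.

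\emph{Monotonicity route.} The monotonicity you hope for does hold in the linear case $a\equiv 1$, $P_c(S)=S$, $\beta(S)=\lambda S$: testing the summed equation with $S^k-S^{k-1}$ shows $|S^k|_{1,2,\T}$ is nonincreasing. That argument uses fixed weights and $P_c$, $\beta$ proportional. In the actual scheme the weights $a_\sigma^k$ change with $k$, so the quadratic forms do not telescope. Testing with $P_c(S^k)-P_c(S^{k-1})$ also produces terms $\tau_\sigma a_\sigma^k\D_{K,\sigma}\delta^k\,\D_{K,\sigma}(P_c(S^k)-P_c(S^{k-1}))$. These pair differences of $\beta$ with differences of $P_c$ and have no sign. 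Nothing in the model forces $D^k$ to be nonincreasing.

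\emph{Fallback route.} This fails because $D^k$ cannot be bounded by $|\beta(S^k)|_{1,2,\T}^2$. Indeed $\D_{K,\sigma}P_c(S^k)/\D_{K,\sigma}\beta(S^k)=P_c'(\xi)/b(\xi)$ for an intermediate $\xi$, and $P_c'/b$ is unbounded at both ends of $(0,1)$. Near $0$, $b(S)\sim S^{\gamma_0-\gamma_2}\to0$; everywhere $b\le1$; and $P_c'$ blows up at $0$ and at $1$. The lower bound $\kappa$ on $S^k$ is mesh dependent. The integrability of $(S^k)^{2-\gamma_2}$ says nothing about differences. A single step of Theorem \ref{thm.dei} only gives $\Delta t\,D^k\le H_d(\bm{S}^{k-1})-H_d(\bm{S}^k)$, i.e.\ $D^k=O(1/\Delta t)$.

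The paper does not bound $D^k$ step by step. It multiplies your per-step inequality $c_\Phi\sum_{K}\m(K)|\delta_K^k|^2+\frac12\sum_{\sigma}\tau_\sigma a_\sigma^k|\D_{K,\sigma}\delta^k|^2\le\frac{C}{2}D^k$ by $\Delta t$ and sums over $k=1,\dots,N$. It then uses the summed energy inequality $\Delta t\sum_{k=1}^N D^k\le H_d(\bm{S}^0)$.

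Your diagnosis is therefore correct: the energy inequality controls the dissipation only after summation in time. This applies to the paper as well. Its proof actually establishes the time-integrated estimate
$\Delta t\sum_{k=1}^N\big(\sum_{K}\m(K)|\delta_K^k|^2+\sum_{\sigma}\tau_\sigma a_\sigma^k|\D_{K,\sigma}\delta^k|^2\big)\le C$, a discrete $L^2(0,T)$ bound. It does not prove the pointwise-in-$k$ form displayed in \eqref{5.est2}. The continuous counterpart in Appendix \ref{sec.app} has the same feature, since its right-hand side is also controlled only after time integration.

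To complete your proof, replace the attempted uniform-in-$k$ bound by this summation and state the conclusion in summed form. A genuinely pointwise, mesh-uniform version would need a bound on $\sup_k D^k$, which neither your proposal nor the paper provides.
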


\begin{proof}
We sum equations \eqref{2.eq1} over $i=1,\ldots,n$. Taking into account that $\sum_{i=1}^n M_{ij}(\bm{S}^k)=0$, this yields
\begin{align*}
  \frac{\m(K)}{\Delta t}\Phi_K(S_K^k-S_K^{k-1})
  = \sum_{i=1}^n\sum_{\sigma\in\E_K}\tau_\sigma
  c_{i,\sigma}^k a_\sigma^k\D_{K,\sigma}\bigg(P_c(S^k) 
  + \frac{\beta(S^k)-\beta(S^{k-1})}{\Delta t}\bigg).
\end{align*}
We multiply this equation by $(\beta(S_K^k)-\beta(S_K^{k-1}))/\Delta t$, sum over $K\in\T$, and use discrete integration by parts \eqref{2.dibp}. This gives $J_1+J_2=0$, where
\begin{align*}
  J_1 &= \frac{1}{(\Delta t)^2}\sum_{K\in\T}
  \m(K)\Phi_K(S_{K}^k-S_{K}^{k-1})
  \big(\beta(S_K^k)-\beta(S_K^{k-1})\big), \\
  J_2 &= \frac{1}{\Delta t}\sum_{i=1}^n\sum_{\sigma\in\E}
  \tau_\sigma c_{i,\sigma}^k a_\sigma^k\D_{K,\sigma}\bigg(P_c(S^k) 
  + \frac{\beta(S^k)-\beta(S^{k-1})}{\Delta t}\bigg)
  \D_{K,\sigma}\big(\beta(S^k)-\beta(S^{k-1})\big).
\end{align*}
Since $\beta'$ is bounded by Lemma \ref{lem.beta}, the first term can be estimated as
\begin{align*}
  J_1 &= \frac{1}{(\Delta t)^2}\sum_{K\in\T}\m(K)\Phi_K
  \frac{S_{K}-S_{K}^{k-1}}{\beta(S_K^k)-\beta(S_K^{k-1})}
  \big(\beta(S_K^k)-\beta(S_K^{k-1})\big)^2 \\
  &\ge \frac{C(\Phi,\beta')}{(\Delta t)^2}
  \sum_{K\in\T}\m(K)\big(\beta(S_K^k)-\beta(S_K^{k-1})\big)^2,
\end{align*}
where $C(\Phi,\beta') = c_\Phi/\|\beta'\|_{L^\infty(0,1)}$. Let $\sigma=K|L$ and $c_{i,K}^k>0$, $c_{i,L}^k>0$, $c_{i,L}^k\neq c_{i,K}^k$. We deduce from {Young's inequality} that
\begin{align*}
  J_2 &= \frac{1}{\Delta t}\sum_{\sigma\in\E}\tau_\sigma a_\sigma^k
  \D_{K,\sigma}\bigg(P_c(S^k) 
  + \frac{\beta(S^k)-\beta(S^{k-1})}{\Delta t}\bigg)
  \D_{K,\sigma}\big(\beta(S^k)-\beta(S^{k-1})\big) \\
  &= \frac{1}{\Delta t}\sum_{\sigma\in\E_K}
  \tau_\sigma a_\sigma^k\D_{K,\sigma}P_c(S^k) 
  \D_{K,\sigma}\big(\beta(S_K^k)-\beta(S_K^{k-1})\big) \\
  &\phantom{xx}+ \frac{1}{(\Delta t)^2}
  \sum_{\sigma\in\E_K}\tau_\sigma a_\sigma^k
  \big|\D_{K,\sigma}\big(\beta(S_K^k)-\beta(S_K^{k-1})\big)\big|^2 \\
  &\ge \frac{1}{2(\Delta t)^2}\sum_{\sigma\in\E_K}\tau_\sigma a_\sigma^k
  \big|\D_{K,\sigma}\big(\beta(S_K^k)-\beta(S_K^{k-1})\big)\big|^2 
  - \frac{1}{2}\sum_{\sigma\in\E_K}\tau_\sigma a_\sigma^k
  |\D_{K,\sigma}P_c(S^k)|^2.
\end{align*}
We infer from Lemma \ref{lem.aux} below that for $p_0\le\gamma_2$, $p_1\le\gamma_1$,
\begin{align*}
  \frac{P_c(S_L^k)-P_c(S_K^k)}{\psi(S_L^k)-\psi(S_K^k)} \le C
  \quad\mbox{if }S_L^k\neq S_K^k.
\end{align*}
By the definition of $a_\sigma^k$, we obtain
\begin{align*}
  a_\sigma^k|\D_{K,\sigma}P_c(S^k)|^2
  &= \frac{\beta(S_L^k)-\beta(S_K^k)}{\psi(S_L^k)-\psi(S_K^k)}
  (P_c(S_L^k)-P_c(S_K^k))^2 \\
  &\le C(\beta(S_L^k)-\beta(S_K^k))(P_c(S_L^k)-P_c(S_K^k)).
\end{align*}
We conclude that
\begin{align*}
  J_2 &\ge \frac{1}{2(\Delta t)^2}\sum_{\sigma\in\E_K}
  \tau_\sigma a_\sigma^k
  \big|\D_{K,\sigma}\big(\beta(S_K^k)-\beta(S_K^{k-1})\big)\big|^2 
  - \frac{C}{2}\sum_{\sigma\in\E_K}\tau_\sigma
  \D_{K,\sigma}\beta(S^k)\D_{K,\sigma}P_c(S^k).
\end{align*}

Finally, we combine the estimates of $J_1$ and $J_2$, multiply them by $\Delta t$, and sum over $k=1,\ldots,N$:
\begin{align*}
  C(&\Phi,\beta')\Delta t\sum_{k=1}^N\sum_{K\in\T}
  \m(K)\bigg(\frac{\beta(S_K^k)-\beta(S_K^{k-1})}{\Delta t}\bigg)^2 \\
  &\phantom{xx}+ \frac{\Delta t}{2}\sum_{k=1}^N
  \sum_{\sigma\in\E}\tau_\sigma a_\sigma^k
  \bigg|\D_{K,\sigma}\bigg(\frac{\beta(S_K^k)-\beta(S_K^{k-1})}{\Delta t}
  \bigg)\bigg|^2 \\
  &\le C\Delta t\sum_{k=1}^N\sum_{\sigma\in\E}\tau_\sigma
  \D_{K,\sigma}\beta(S^k)\D_{K,\sigma}P_c(S^k) \le C,
\end{align*}
where the last inequality follows from the discrete energy inequality \eqref{2.dei}. 
\end{proof}

\begin{lemma}\label{lem.aux}
Assume that $p_0\le \gamma_2$ and $p_1\le \gamma_1$. Then there exists $C>0$ such that for all $S$, $S'\in(0,1)$ with $S\neq S'$,
\begin{align*}
  \frac{P_c(S)-P_c(S')}{\psi(S)-\psi(S')}\le C.
\end{align*}
\end{lemma}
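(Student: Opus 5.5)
The plan is to use the Cauchy mean value theorem, which reduces the difference quotient to a ratio of derivatives. Both $P_c$ and $\psi$ are $C^1$ on $(0,1)$, and $\psi'(S)=S^{-\gamma_2}+(1-S)^{-\gamma_1}>0$ there. Hence for $S\neq S'$ in $(0,1)$ there exists $\xi$ strictly between $S$ and $S'$, so $\xi\in(0,1)$, such that
\begin{align*}
  \frac{P_c(S)-P_c(S')}{\psi(S)-\psi(S')} = \frac{P_c'(\xi)}{\psi'(\xi)}
  = \frac{\xi^{-p_0}+(1-\xi)^{-p_1}}{\xi^{-\gamma_2}+(1-\xi)^{-\gamma_1}}.
\end{align*}
The claim therefore follows once the right-hand side is shown to be bounded uniformly for $\xi\in(0,1)$. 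This is the pointwise inequality $P_c'\le C\psi'$ already mentioned after Assumption (A4).

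To prove it, I would compare the two summands separately. Since $0<\xi<1$ and $p_0\le\gamma_2$, we have $\xi^{-p_0}=\xi^{\gamma_2-p_0}\xi^{-\gamma_2}\le\xi^{-\gamma_2}$. Likewise, $p_1\le\gamma_1$ gives $(1-\xi)^{-p_1}\le(1-\xi)^{-\gamma_1}$. Adding the two bounds yields $P_c'(\xi)\le\psi'(\xi)$, so the lemma holds with $C=1$.

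The only point to check is that $P_c'$ in (A4) and $\psi'=b/a$ are exactly the expressions used above, which follows from the computation of $\psi'$ after (A4). The mean value step needs $\psi(S)\neq\psi(S')$, which holds because $\psi$ is strictly increasing. I expect no real obstacle; the only subtlety is that $\xi$ lies in the open interval, where both derivatives are finite. The constant is in fact independent of $S$ and $S'$, which is what the proof of Proposition \ref{prop.beta} requires.
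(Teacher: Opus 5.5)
Your proof is correct and follows essentially the same route as the paper. The paper applies Rolle's lemma to the auxiliary function $f(z)=P_c(z)-\frac{P_c(S)-P_c(S')}{\psi(S)-\psi(S')}(\psi(z)-\psi(S'))$, which is just the standard proof of the Cauchy mean value theorem you invoke directly; it then uses the same termwise comparison to get $C=1$ and the same strict monotonicity of $\psi$ to exclude $\psi(S)=\psi(S')$.
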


\begin{proof}
Let $\psi(S)\neq\psi(S')$. We introduce
\begin{align*}
  f(z) = P_c(z) - \frac{P_c(S)-P_c(S')}{\psi(S)-\psi(S')}
  (\psi(z)-\psi(S')), \quad z\in(S,S').
\end{align*} 
Since $f(S)=f(S')$, we infer from Rolle's lemma that there exists $\xi$ between $S$ and $S'$ such that $f'(\xi)=0$, which is equivalent to
\begin{align*}
  P_c'(\xi) - \frac{P_c(S)-P_c(S')}{\psi(S)-\psi(S')}\psi'(\xi) = 0
\end{align*}
and hence to 
\begin{align*}
  \frac{P_c(S)-P_c(S')}{\psi(S)-\psi(S')} 
  = \frac{P_c'(\xi)}{\psi'(\xi)}.
\end{align*}
The assumptions $p_0\le\gamma_2$ and $p_1\le\gamma_1$ imply that $S^{-p_0}\le S^{-\gamma_2}$ and $(1-S)^{-p_1}\le(1-S)^{-\gamma_1}$ for $0<S<1$, giving
\begin{align*}
  P_c'(\xi) = \xi^{-p_0} + (1-\xi)^{-p_1}
  \le \xi^{-\gamma_2} + (1-\xi)^{-\gamma_1} = \psi'(\xi),
\end{align*}
which shows that
\begin{align*}
  \frac{P_c(S)-P_c(S')}{\psi(S)-\psi(S')}\le 1
\end{align*}
for all $(S,S')$ such that $\psi(S)\neq \psi(S')$. It remains to prove the statement on the curve $\{(S,S'):\psi(S)=\psi(S')\}$. Since $\psi'(S)>0$ for all $S\in(0,1)$, the function $\psi$ is strictly
increasing on $(0,1)$. Hence $\psi(S)=\psi(S')$ implies $S=S'$, which is
excluded by assumption. Therefore the case $\psi(S)=\psi(S')$ cannot occur for $S\neq S'$, and the proof is complete.
\end{proof}

\begin{remark}[Towards convergence of the scheme]\rm
The stability estimates \eqref{5.est1}--\eqref{5.est2} are the first step to perform the limit $\eta=\max\{\Delta x,\Delta t\}\to 0$. The limit can be performed in two further steps. For this, let $\bm{S}^\eta$ be a solution to \eqref{2.eq1}--\eqref{2.eq2}. First, we need uniform bounds for the discrete time derivative of $\bm{S}^\eta$, derived from \eqref{2.eq1} and the bounds shown in Theorem \ref{thm.dei} and Proposition \ref{prop.beta}. Then the discrete Aubin--Lions lemma \cite{GaLa12,JuZu21} gives the existence of a subsequence such that $\bm{S}^\eta\to\bm{S}$ strongly in $L^2(\Omega\times(0,T))$ as $\eta\to 0$. Moreover, by weak compactness, $\Pi\bm\mu^\eta\rightharpoonup\Pi\bm\mu$ weakly in $L^2(\Omega\times(0,T))$. The limit $\bm\mu$ can be identified similarly as in the proof of Theorem \ref{thm.ex}. We also need the definition of discrete gradients $\na^{\eta}$ on a dual mesh; see \cite{CLP03} for a definition. Then $\Pi(\na^\eta\bm\mu^\eta)$ converges weakly in $L^2(\Omega\times(0,T))$ to $\Pi(\na\bm\mu)$ \cite[Lemma 4.4]{CLP03}.

Second, we need to pass to the limit $\eta\to 0$ in equations \eqref{2.eq1}--\eqref{2.eq2}. This step is quite technical. The idea is to introduce space-time piecewise constant functions and to formulate the numerical scheme as a weak formulation involving a space-time piecewise constant approximation of a smooth test function. The discrete gradients are shifted to the test function by discrete integration by parts, and the fact that the space-time piecewise constant approximation converges to the test function as $\eta\to 0$ is exploited. This technique was developed in \cite{CLP03}; also see \cite{JuZu21}. We do not detail this convergence here, since it is beyond the scope of this paper.
\end{remark}


\section{Numerical experiments}\label{sec.exp}

In this section, we present several 2D simulations to evaluate how the thermodynamically consistent cross-diffusion model behaves physically, while also testing the stability of our finite-volume scheme. We handle the time integration via a fully implicit Euler method and resolve the nonlinear systems at each step using standard Newton iterations.

All simulations take place on a unit square domain $\Omega = (0,1)^2$
with a constant porosity $\Phi \equiv 1$. We use a simple Cartesian mesh made of $50 \times 50$ control volumes (i.e.\ $N_x = N_y = 50$).
Following Assumptions (A3) and (A4), we set the parameters for the mobility $a(S)$, the relaxation $b(S)$, and the static capillary pressure $P_c(S)$ to $\gamma_0 = 4.0$, $\gamma_1 = 3.0$, $\gamma_2 = 3.0$, and $p_0 = p_1 = 2.1$. The capillary scaling factor equals $\gamma = 1.0$.

To couple the $n=3$ species, we use a Maxwell--Stefan mobility matrix $M(\bm{S}) \in \mathbb{R}^{3 \times 3}$. This matrix automatically fulfills the symmetry and positive semidefiniteness required by Assumption \textbf{(A2)}:
$$
M(\bm{S}) = 
\begin{pmatrix}
\frac{S_1 S_2}{\kappa_{12}} + \frac{S_1 S_3}{\kappa_{13}} & -\frac{S_1 S_2}{\kappa_{12}} & -\frac{S_1 S_3}{\kappa_{13}} \\[4pt]
-\frac{S_1 S_2}{\kappa_{12}} & \frac{S_1 S_2}{\kappa_{12}} + \frac{S_2 S_3}{\kappa_{23}} & -\frac{S_2 S_3}{\kappa_{23}} \\[4pt]
-\frac{S_1 S_3}{\kappa_{13}} & -\frac{S_2 S_3}{\kappa_{23}} & \frac{S_1 S_3}{\kappa_{13}} + \frac{S_2 S_3}{\kappa_{23}}
\end{pmatrix},
$$
where the coefficients $\kappa_{ij} > 0$ represent the inverse friction coefficients that drive the cross-diffusion between species $i$ and $j$. We want to observe a highly heterogeneous mixing process, so we keep $\kappa_{12} = 1.0$ and $\kappa_{23} = 1.0$, while setting $\kappa_{13} = 0.01$.

A key goal of these tests is to see exactly how the dynamic capillarity changes the transient mixing phase. For this purpose, we compare the complete model that includes the relaxation term ($\partial_t \beta \neq 0$) and a simplified classical version with $\partial_t \beta = 0$. The evolution of the mixture under dynamic capillary pressure is shown in Figure \ref{fig:caso8}, whereas Figure \ref{fig:caso9} reports the results for the classical system. The tiny value of $\kappa_{13}$ triggers intense cross-diffusion, causing the species to mix very quickly and reach a nearly homogeneous state around $t=0.3$. The presence of the dynamic capillary pressure term, i.e.\ $\partial_t\beta \neq 0$, dampens the effect of capillary diffusion without affecting the cross-diffusion.
\begin{figure}[htbp]
    \centering
    \includegraphics[width=\textwidth]{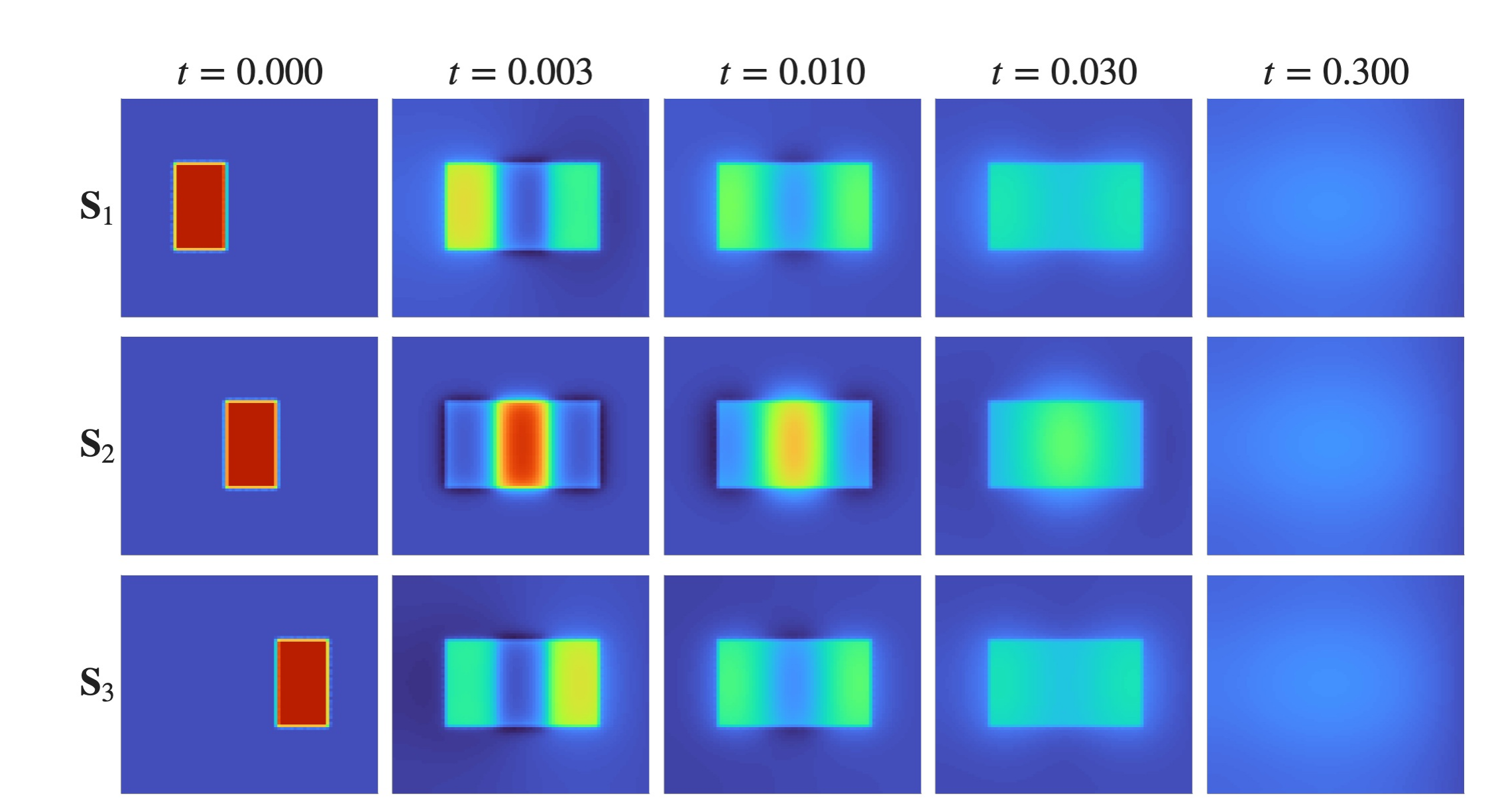}
    \caption{Time evolution of the three-species mixture incorporating dynamic capillary pressure ($\partial_t \beta \neq 0$) {at times $t=0,0.003,0.01,0.03,0.3$ (from left to right)}.}
    \label{fig:caso8}
\end{figure}

\begin{figure}[htbp]
    \centering
    \includegraphics[width=\textwidth]{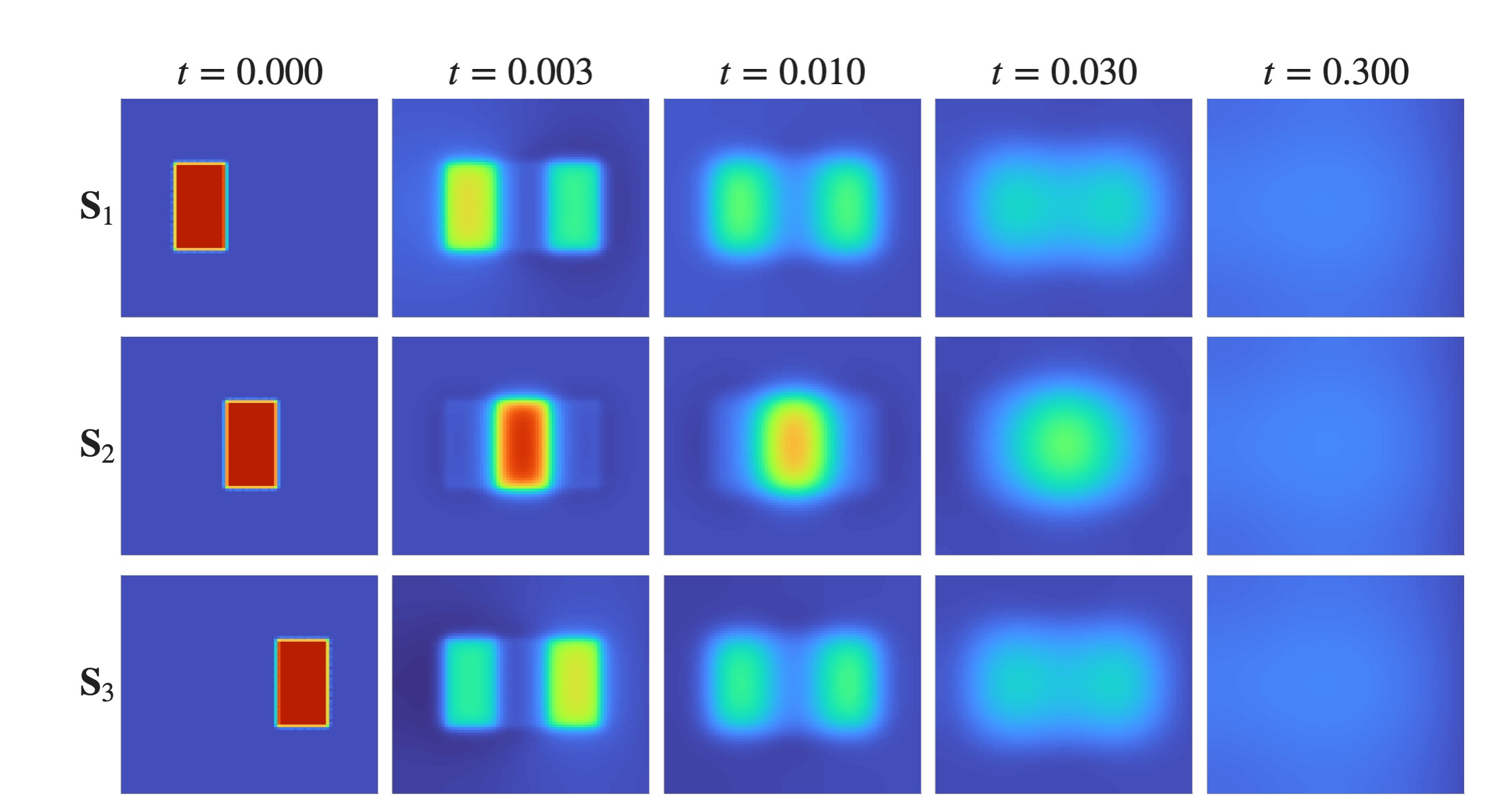} 
    \caption{Time evolution of the three-species mixture without dynamic capillary pressure ($\partial_t \beta = 0$) {at times $t=0,0.003,0.01,0.03,0.3$ (from left to right)}.}
    \label{fig:caso9}
\end{figure}

To check the thermodynamic consistency, we compute the discrete total energy $H_d(\bm{S}^k)$ from \eqref{1.Hd} up to $T = 5$, giving the system plenty of time to equilibrate. Figure \ref{fig:energy} shows this energy decay on a semi-logarithmic scale. As our theory predicts, $k\mapsto H_d(\bm{S}^k)$ decreases monotonically, strictly satisfying the discrete energy inequality \eqref{1.dei}. The fact that energy dissipates without any numerical oscillations is a strong indicator of the stability of our scheme.

\begin{figure}[htbp]
    \centering
    \includegraphics[width=0.85\textwidth,height=70mm]{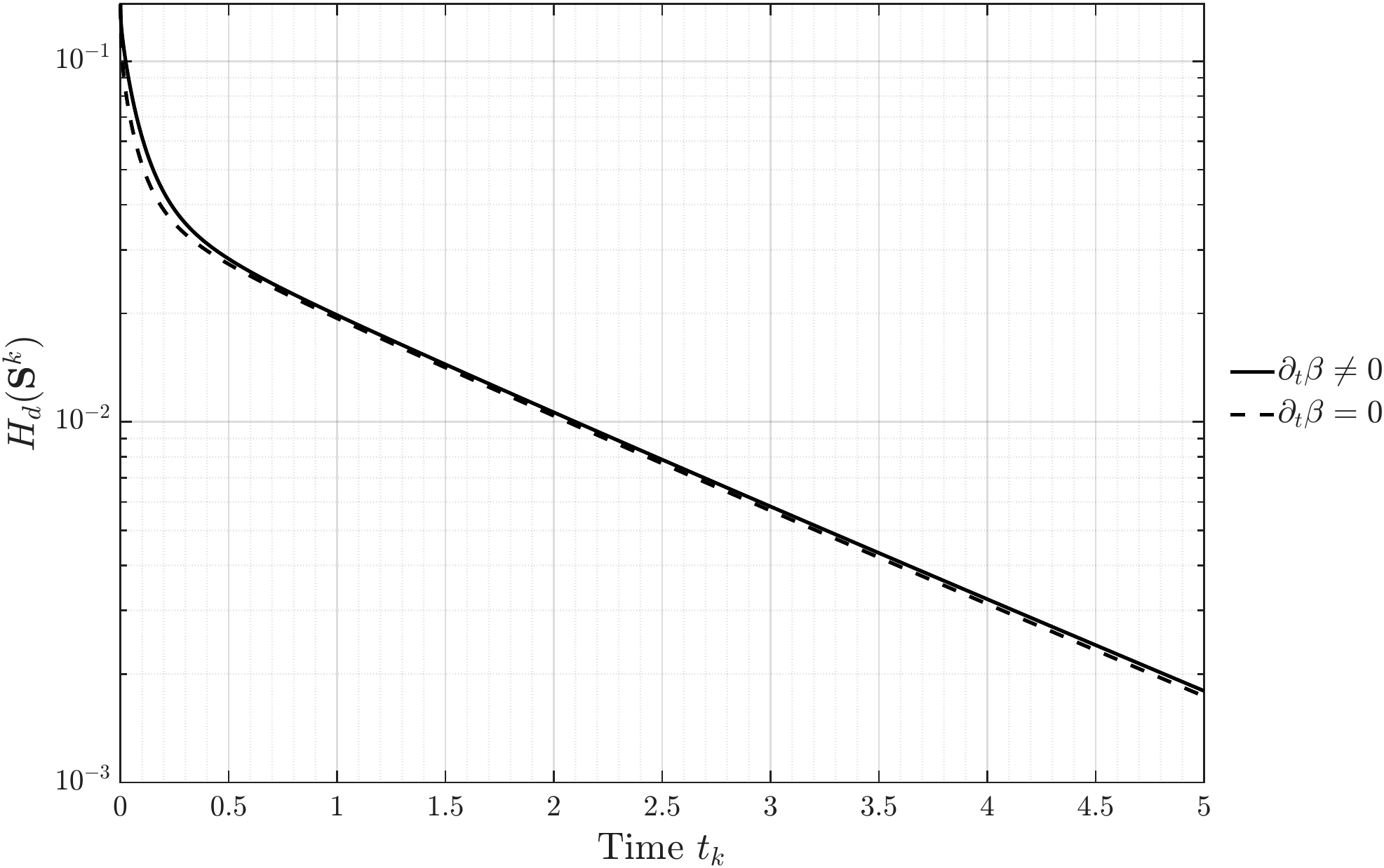} 
    \caption{Evolution of the discrete total energy $H_d(\bm{S}^k)$ on a semilogarithmic scale.}
    \label{fig:energy}
\end{figure}

Finally, we run a Cauchy convergence test to assess the spatial accuracy of our finite-volume method. To avoid artificial superconvergence effects at late-time steady states, we measure the $L^1$ error at an intermediate simulation time $T = 0.2$, capturing the highly nonlinear transient dynamics. We use a highly refined $240 \times 240$ grid as the reference and calculate errors on meshes with $N=20, 30, 40, 60$, and $120$. As shown in Figure \ref{fig:convergence}, the $L^1$ error for $S_1$ drops with an empirical rate of roughly $r \approx 0.99$, paralleling the $\mathcal{O}(\Delta x)$ reference triangle. This perfectly aligns with theoretical expectations: Our scheme prioritizes robust structure preservation---specifically through the logarithmic mean reconstructions and upwind-like two-point flux approximations---which successfully handles the degenerate cross-diffusion without spurious oscillations, resulting in the expected first-order spatial convergence.

\begin{figure}[htbp]
    \centering
    \includegraphics[width=0.85\textwidth,height=80mm]{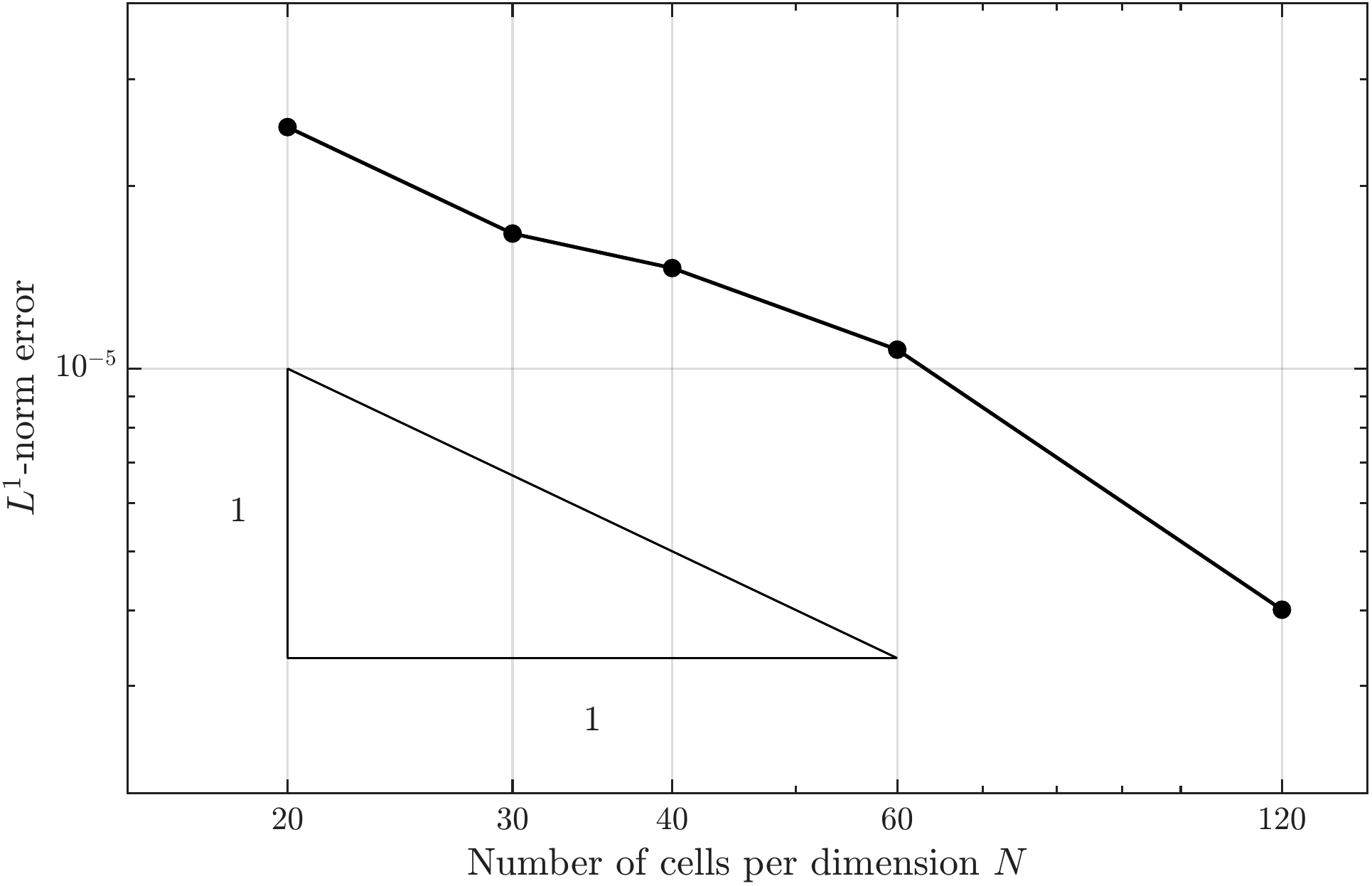}
    \caption{Spatial Cauchy convergence study in the $L^1$ norm for the saturation $S$ at time $T=0.2$. The scheme robustly exhibits first-order accuracy $\mathcal{O}(\Delta x)$.}
    \label{fig:convergence}
\end{figure}

\newpage
\begin{appendix}
\section{Some estimates for the continuous problem}\label{sec.app}

We show the energy equality and a bound for $\pa_t\beta(S)$ for smooth solutions. Recall definition \eqref{1.E} of the free energy.

\begin{proposition}[Energy inequality]
Let $\bm{S}$ be a smooth solution to \eqref{1.Si}--\eqref{1.mui}. Then inequality \eqref{1.ei} holds for $t>0$.
\end{proposition}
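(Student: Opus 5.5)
The proof will be the continuous counterpart of Theorem \ref{thm.dei}. I will test \eqref{1.Si} with $\mu_i$, sum over $i=1,\ldots,n$, integrate over $\Omega$, and integrate by parts. On $\Gamma_N$ the fluxes vanish, and on $\Gamma_D$ we have $\mu_i=0$, because $S_i=S_i^D$ there and hence $\log(S_i/S)-\log(S_i^D/S^D)=0$ and $\psi(S^D)=0$. So no boundary terms appear, and
\begin{align*}
  \sum_{i=1}^n\int_\Omega\Phi\,\pa_t S_i\,\mu_i\,dx
  - \sum_{i=1}^n\int_\Omega F_i\cdot\na\mu_i\,dx = 0 .
\end{align*}
By the chain rule, $\mu_i=\pa E/\pa S_i$ turns the first term into $\frac{d}{dt}\int_\Omega\Phi E(\bm S)\,dx$.

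The second term splits into a cross-diffusion part and a Darcy part. For the cross-diffusion part I use Assumption (A2) together with $\sum_i M_{ij}=0$:
\begin{align*}
  \sum_{i,j=1}^n\int_\Omega M_{ij}\na\mu_i\cdot\na\mu_j\,dx \ge c_M\sum_{i=1}^n\int_\Omega|\Pi(\na\mu_i)|^2dx,
\end{align*}
where the projection is applied componentwise in the species index. For the Darcy part I need the continuous identity
\begin{align*}
  \sum_i\frac{S_i}{S}a(S)\na\mu_i = a(S)\Big(\sum_i\frac{S_i}{S}\na\log\frac{S_i}{S} + \psi'(S)\na S\Big).
\end{align*}
The first sum equals $\na\big(\sum_i S_i/S\big)=0$, since $\sum_i S_i/S=1$. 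The second term is $a\psi'\na S=b\na S=\na\beta(S)$, using $a\psi'=b$ from \eqref{1.mui}. This is the continuous analogue of property \eqref{2.c} and the choice \eqref{2.quot}. The Darcy part then becomes
\begin{align*}
  \int_\Omega\na\beta(S)\cdot\na\big(P_c(S)+\pa_t\beta(S)\big)dx
  = \int_\Omega\beta'(S)P_c'(S)|\na S|^2dx + \frac{d}{dt}\frac12\int_\Omega|\na\beta(S)|^2dx .
\end{align*}
The term $\pa_t\tfrac12|\na\beta(S)|^2$ is exactly the interfacial part of $H$.

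Collecting all three contributions gives \eqref{1.ei}, in fact with equality before the hypocoercivity lower bound is applied. The main point to justify is the boundary treatment: I must check that $\mu_i=0$ on $\Gamma_D$. I also must check that $\pa_t\beta(S)$ gives no boundary term. On $\Gamma_N$ this holds because $F_i\cdot\nu=0$. On $\Gamma_D$ it holds because $\mu_i=0$ there, so the integrand $\mu_iF_i\cdot\nu$ vanishes. The convexity and regularity issues are absent for smooth solutions with $0<S<1$ and $S_i>0$, which I will assume, as guaranteed by the energy structure.
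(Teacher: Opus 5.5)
Your proposal is correct and follows the paper's argument. You test with $\mu_i$, which is equivalent to differentiating $H$ in time. You use $\sum_i S_i\nabla\mu_i = S\nabla\psi(S) = (S/a(S))\nabla\beta(S)$ to reduce the Darcy part to $\nabla\beta(S)\cdot\nabla\big(P_c(S)+\partial_t\beta(S)\big)$, so the $\partial_t\beta$ term yields exactly the time derivative of the interfacial energy. You then apply (A2) to the cross-diffusion part. Your explicit check that the boundary terms vanish ($\mu_i=0$ on $\Gamma_D$ because $\psi(S^D)=0$, and $F_i\cdot\nu=0$ on $\Gamma_N$) is a detail the paper leaves implicit.
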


\begin{proof}
Observing that $\pa E/\pa S_i = \mu_i$, we obtain
\begin{align*}
  \frac{d}{dt}\int_\Omega \bigg(\Phi(x)E(\bm{S}) 
  &+ \frac12|\na\beta(S)|^2\bigg)dx
  = \int_\Omega\bigg(\sum_{i=1}^n\Phi(x)\pa_t S_i\mu_i
  + \na\beta(S)\cdot\na\pa_t\beta(S)\bigg)dx \\
  &= -\int_\Omega\frac{a(S)}{S}\na\big(P_c(S)+\pa_t\beta(S)\big)
  \cdot\sum_{i=1}^n S_i\na\mu_i dx \\
  &\phantom{xx}
  - \sum_{i,j=1}^n\int_\Omega M_{ij}(\bm{S})\na\mu_j\cdot\na\mu_i
 + \int_\Omega\na\beta(S)\cdot\na\pa_t\beta(S)dx.
\end{align*}
The properties $\sum_{i=1}^n S_i=S$ and $\psi'=b/a=\beta'/a$ lead to
\begin{align*}
  \sum_{i=1}^n S_i\na\mu_i = \sum_{i=1}^n S_i
  \bigg(\frac{\na S_i}{S_i}-\frac{\na S}{S}\bigg)
  + \sum_{i=1}^n S_i\na\psi(S) = S\na\psi(S) 
  = \frac{S}{a(S)}\na\beta(S).
\end{align*}
Then the terms involving $\pa_t\beta(S)$ cancel, and the identity
\begin{align*}
  a(S)\na {P_c(S)}\cdot\na\psi(S) = P_c'(S)\beta'(S)|\na S|^2,
\end{align*}
finishes the proof.
\end{proof}

\begin{proposition}[Bound for $\pa_t\beta(S)$]
Let $\bm{S}$ be a smooth solution to \eqref{1.Si}--\eqref{1.mui}. Assume that $c_\beta=1/\|\beta'\|_{L^\infty(0,1)}>0$ and that there exists $C>0$ such that $P_c'(S)\le C \psi'(S)$ for all $0\le S\le 1$. Then, for a constant $C_1>0$ depending on $\bm{S}^0$,
\begin{align*}
  c_\beta c_\Phi\int_\Omega|\pa_t\beta(S)|^2 dx
  + \frac12\int_\Omega a(S)|\na\pa_t\beta(S)|^2 dx \le C_1.
\end{align*}
\end{proposition}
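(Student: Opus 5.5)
The plan is to mimic the proof of Proposition \ref{prop.beta} at the continuous level, with the time difference quotient replaced by $\pa_t$, and to work at a fixed time $t>0$. First we sum \eqref{1.Si}--\eqref{1.Fi} over $i=1,\ldots,n$. Since $\sum_{i=1}^n S_i/S=1$ and $\sum_{i=1}^n M_{ij}=0$, the cross-diffusion part drops out and we obtain the scalar pseudo-parabolic equation
\begin{align*}
  \Phi(x)\pa_t S = \diver\big(a(S)\na(P_c(S)+\pa_t\beta(S))\big),
\end{align*}
with the boundary conditions inherited from \eqref{1.bic}. Since $S=S^D$ on $\Gamma_D$ for all $t$, we have $\pa_t\beta(S)=0$ there. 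The flux vanishes on $\Gamma_N$, so no boundary terms appear when we test with $\pa_t\beta(S)$.

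Next we multiply this equation by $\pa_t\beta(S)=\beta'(S)\pa_t S$ and integrate over $\Omega$. On the left-hand side, $\Phi\,\beta'(S)|\pa_t S|^2\ge c_\Phi\,|\pa_t\beta(S)|^2/\|\beta'\|_{L^\infty}$, because $|\pa_t\beta(S)|^2=\beta'(S)^2|\pa_t S|^2\le\|\beta'\|_{L^\infty}\beta'(S)|\pa_t S|^2$. Here $\|\beta'\|_{L^\infty}<\infty$ is guaranteed by Lemma \ref{lem.beta}, and this is where $c_\beta c_\Phi$ comes from. Integrating the right-hand side by parts gives
\begin{align*}
  -\int_\Omega a(S)|\na\pa_t\beta(S)|^2dx-\int_\Omega a(S)\na P_c(S)\cdot\na\pa_t\beta(S)dx .
\end{align*}
We apply Young's inequality to the mixed term, which absorbs half of the first integral. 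It remains to bound $\frac12\int_\Omega a(S)|\na P_c(S)|^2dx$. Using $a\psi'=\beta'$ and the hypothesis $P_c'\le C\psi'$, we get
\begin{align*}
  a(S)P_c'(S)^2|\na S|^2\le C\,a(S)\psi'(S)P_c'(S)|\na S|^2=C\,\beta'(S)P_c'(S)|\na S|^2 .
\end{align*}
This yields, for each $t>0$,
\begin{align*}
  c_\beta c_\Phi\int_\Omega|\pa_t\beta(S)|^2dx+\frac12\int_\Omega a(S)|\na\pa_t\beta(S)|^2dx\le \frac C2\int_\Omega\beta'(S)P_c'(S)|\na S|^2dx=:\frac C2 D(t).
\end{align*}

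The main obstacle is bounding the dissipation $D(t)$ at each fixed time. The energy inequality \eqref{1.ei} only controls $\int_0^T D\,dt\le H(\bm S^0)$, so it does not give a pointwise bound. My first attempt is to rewrite $D=\int_\Omega (P_c'/\beta')|\na\beta(S)|^2dx$ and use $P_c'/\beta'=P_c'/(a\psi')\le C/a(S)$. Then $D(t)\le C\|1/a(S(t))\|_{L^\infty}\,|\beta(S(t))|_{H^1}^2$, and the factor $|\beta(S(t))|_{H^1}^2$ is bounded by $2H(\bm S^0)$ for every $t$ by \eqref{1.ei}. For a smooth solution, $S$ stays in a compact subset of $(0,1)$, which is the continuous analogue of the $\kappa$-bounds of Theorem \ref{thm.ex} and is suggested by the singularity of $\Psi$ in \eqref{2.Psi}. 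Hence $1/a(S)$ is bounded, and $C_1$ depends on $\bm S^0$ through $H(\bm S^0)$ and these positivity bounds. If one wants a constant depending on $\bm S^0$ alone, the fallback is a Gronwall-type argument: differentiate $D(t)$ in time and use the equation to show that it is controlled by the left-hand side above plus $D$ itself. I expect this step to be the delicate one.
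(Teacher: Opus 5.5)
Up to the inequality
\[
  c_\beta c_\Phi\int_\Omega|\pa_t\beta(S)|^2dx
  +\frac12\int_\Omega a(S)|\na\pa_t\beta(S)|^2dx\le\frac C2\,D(t),
\]
your argument is the paper's proof step by step. You sum over $i$, test with $\pa_t\beta(S)$, use $\|\beta'\|_{L^\infty}$ on the time term, apply Young's inequality, and use $P_c'\le C\psi'=C\beta'/a$. You also treat the boundary terms explicitly. The paper then closes with the single sentence that the right-hand side ``is bounded by the energy inequality''. Your objection to exactly this step is correct: \eqref{1.ei} only gives $\int_0^T D(t)\,dt\le H(\bm{S}^0)$. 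So what this argument actually proves, yours and the paper's alike, is the time-integrated estimate
\[
  c_\beta c_\Phi\int_0^T\!\!\int_\Omega|\pa_t\beta(S)|^2dx\,dt
  +\frac12\int_0^T\!\!\int_\Omega a(S)|\na\pa_t\beta(S)|^2dx\,dt
  \le\frac C2\,H(\bm{S}^0).
\]
This is also what the proof of Proposition \ref{prop.beta} delivers, after multiplying by $\Delta t$ and summing over $k$.

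The genuine gap in your proposal is the attempted upgrade to a pointwise-in-time bound. The estimate $D(t)\le C\|1/a(S(t))\|_{L^\infty}\|\na\beta(S(t))\|_{L^2}^2$ is fine, but $1/a(S)=S^{-\gamma_0}+(1-S)^{-\gamma_1}$ is not controlled by the data. In the continuous setting the energy yields only a bound on $\int_\Omega\Psi(S)\,dx$, i.e.\ $L^1$ bounds on $S^{2-\gamma_2}$ and $(1-S)^{2-\gamma_1}$, not a uniform distance from $0$ and $1$. The $\kappa$ of Theorem \ref{thm.ex} arises from dividing by $\min_K\m(K)$ and is expected to degenerate under refinement (Remark \ref{rem.mesh}). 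The bound on $\na\beta(S)$ cannot keep $S$ away from $0$ and $1$ either, because $\beta$ is bounded and Lipschitz on $[0,1]$ (Lemma \ref{lem.beta}). Invoking the compact range of a smooth solution makes $C_1$ depend on that particular solution (and on $t$ or $T$). That empties the statement, since every smooth solution satisfies such a bound by continuity.

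The Gronwall fallback is not carried out, and the natural attempt runs into the same degeneracy. Write $D=\int_\Omega|\na G(S)|^2dx$ with $G'=\sqrt{\beta'P_c'}$. Its time derivative produces $\sqrt{P_c'/\beta'}\,\na\pa_t\beta(S)$, and absorbing this into $\int_\Omega a(S)|\na\pa_t\beta(S)|^2dx$ requires a bound on $P_c'/(a\beta')$, which is only $\le C/a^2$. So you have two options. Either state the result in the time-integrated form above, or add a structural hypothesis such as $a(P_c')^2\le C(\beta')^2$. For the power laws in (A3)--(A4) with $p_0,p_1>0$, this means $2p_0\le2\gamma_2-\gamma_0$ and $2p_1\le\gamma_1$. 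Under it, $\int_\Omega a(S)|\na P_c(S)|^2dx\le C\|\na\beta(S)\|_{L^2}^2\le 2C\,H(\bm{S}^0)$ for every $t$. The condition $P_c'\le C\psi'$ alone does not give this.
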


\begin{proof}
We sum equation \eqref{1.Si} over $i=1,\ldots,n$. We deduce from $\sum_{i=1}^n S_i/S=1$ and $\sum_{i=1}^n M_{ij}=0$ that
\begin{align*}
  \Phi(x)\pa_t S = \diver\big(a(S)\na(P_c(S)+\pa_t\beta(S))\big)
  \quad\mbox{in }\Omega,\ t>0.
\end{align*}
Multiplying this equation by $\pa_t\beta(S)$, integrating over $\Omega$, and integrating by parts, we find that
\begin{align*}
  \int_\Omega\Phi(x)\pa_t S\pa_t\beta(S) dx
  = -\int_\Omega a(S)\na\big(P_c(S)+\pa_t\beta(S)\big)
  \cdot\na\pa_t\beta(S)dx.
\end{align*}
The left-hand side is estimated as
\begin{align*}
  \int_\Omega\Phi(x)\pa_t S\pa_t\beta(S) dx
  = \int_\Omega\Phi(x)\frac{|\pa_t\beta(S)|^2}{\beta'(S)} dx
  \ge c_\beta c_\Phi\int_\Omega|\pa_t\beta(S)|^2 dx.
\end{align*}
Together with Young's inequality, this shows that
\begin{align*}
  c_\beta c_\Phi\int_\Omega|\pa_t\beta(S)|^2 dx
  &\le -\int_\Omega a(S)P_c'(S)\na S\cdot\na\pa_t\beta(S)dx
  - \int_\Omega a(S)|\na\pa_t\beta(S)|^2 dx \\
  &\le \frac12\int_\Omega a(S)P_c'(S)^2|\na S|^2 dx
  - \frac12\int_\Omega a(S)|\na\pa_t\beta(S)|^2 dx.
\end{align*}
By assumption, $P_c'(S)\le C\psi'(S) {=} Cb(S)/a(S) {=} C\beta'(S)/a(S)$ and hence
\begin{align*}
  c_\beta c_\Phi\int_\Omega|\pa_t\beta(S)|^2 dx
  + \frac12\int_\Omega a(S)|\na\pa_t\beta(S)|^2 dx
  \le \frac{C}{2}\int_\Omega \beta'(S)P_c'(S)|\na S|^2 dx,
\end{align*}
and the right-hand side is bounded by the energy inequality, ending the proof.
\end{proof}

\end{appendix}


\end{document}